\begin{document}
% One author
\title[Co-letterplace ideals and Bier spheres]{Resolutions of co-letterplace ideals and generalizations
of Bier spheres}

\author{Alessio D'Al{\`i}}
\address{Dipartimento di Matematica\\
         Universit{\`a} degli Studi di Genova\\
         Via Dodecaneso 35\\
         16146 Genova\\
         Italy}
\email{dali@dima.unige.it}

\author{Gunnar Fl{\o}ystad}
\address{Universitetet i Bergen\\ 
         Matematisk institutt \\
        Postboks 7803\\
        5020 Bergen \\
        Norway} 
\email{gunnar@mi.uib.no}
%%%%%%%%
%\author{Bj\o rn M\o ller Greve}
%\address{Matematisk insitutt\\
%         University of Bergen}
%\email{Bjorn.Greve@student.uib.no}

\author{Amin Nematbakhsh}
\address{School of Mathematics\\
         Institute for Research in Fundamental Sciences (IPM)\\
         P.O. Box: 19395-5746\\
         Tehran\\
         Iran}
\email{nematbakhsh@ipm.ir}

%\urladdr{http://webaddress}
%\thanks{thanks} 
% End one author

%\keywords{keywords}
\subjclass[2010]{Primary: 13D02, 05E40, Secondary: 52B55}
\date{\today}

\begin{abstract}
%We introduce a new technique, based on previous work by Yanagawa, for describin%g linear resolutions of squarefree monomial ideals.
%Using this technique 

We give the resolutions of co-letterplace ideals of posets
in a completely explicit, very simple form. This 
generalizes and simplifies a number of linear resolutions in the literature,
among them the Eliahou-Kervaire resolutions of strongly stable ideals 
generated in a single degree. Our method is based on a general result
of K.~Yanagawa using the canonical module of a Cohen-Macaulay Stanley-Reisner
ring. We discuss in detail how the canonical module may effectively be
computed, and from this derive directly the resolutions.

A surprising consequence is that we obtain a large class
of simplicial spheres comprehensively generalizing Bier spheres.
\end{abstract}

\maketitle
%\tableofcontents

% Option 5
% Theorems, corollaries, lemmas, and propositions, in the most 
% emphatic (plain) style; all are numbered separately.
% There is a Main Theorem in the most emphatic (plain) 
% style, unnumbered. There are definitions, in the less emphatic
% (definition) style. There are notations, in the least emphatic
%(remark) style, unnumbered.

\theoremstyle{plain}
\newtheorem{theorem}{Theorem}[section]
\newtheorem{corollary}[theorem]{Corollary}
\newtheorem*{main}{Main Theorem}
\newtheorem{lemma}[theorem]{Lemma}
\newtheorem{proposition}[theorem]{Proposition}

\theoremstyle{definition}
\newtheorem{definition}[theorem]{Definition}
\newtheorem{fact}{Fact}

\theoremstyle{remark}
\newtheorem{notation}[theorem]{Notation}
\newtheorem{remark}[theorem]{Remark}
\newtheorem{example}[theorem]{Example}
\newtheorem{claim}{Claim}

%Mine egne.

\newcommand{\psp}[1]{{{\bf P}^{#1}}}
\newcommand{\psr}[1]{{\bf P}(#1)}
\newcommand{\op}{{\mathcal O}}
\newcommand{\opw}{\op_{\psr{W}}}
\newcommand{\go}{\op}

%Initial ideals
\newcommand{\ini}[1]{\text{in}(#1)}
\newcommand{\gin}[1]{\text{gin}(#1)}
\newcommand{\kr}{{\Bbbk}}
\newcommand{\pd}{\partial}
\newcommand{\vardel}{\partial}
\renewcommand{\tt}{{\bf t}}

%Kategorier

\newcommand{\coh}{{{\text{{\rm coh}}}}}

%Modulkategorier

\newcommand{\modv}[1]{{#1}\text{-{mod}}}
\newcommand{\modstab}[1]{{#1}-\underline{\text{mod}}}

\newcommand{\sut}{{}^{\tau}}
\newcommand{\sumit}{{}^{-\tau}}
\newcommand{\til}{\thicksim}

\newcommand{\totp}{\text{Tot}^{\prod}}
\newcommand{\dsum}{\bigoplus}
\newcommand{\dprod}{\prod}
\newcommand{\lsum}{\oplus}
\newcommand{\lprod}{\Pi}

% Algebraer
\newcommand{\La}{{\Lambda}}

\newcommand{\sirstj}{\circledast}

% Knipper
\newcommand{\she}{\EuScript{S}\text{h}}
\newcommand{\cm}{\EuScript{CM}}
\newcommand{\cmd}{\EuScript{CM}^\dagger}
\newcommand{\cmri}{\EuScript{CM}^\circ}
\newcommand{\cler}{\EuScript{CL}}
\newcommand{\clerd}{\EuScript{CL}^\dagger}
\newcommand{\clerri}{\EuScript{CL}^\circ}
\newcommand{\gor}{\EuScript{G}}
\newcommand{\gF}{\mathcal{F}}
\newcommand{\gG}{\mathcal{G}}
\newcommand{\gM}{\mathcal{M}}
\newcommand{\gE}{\mathcal{E}}
\newcommand{\gD}{\mathcal{D}}
\newcommand{\gI}{\mathcal{I}}
\newcommand{\gP}{\mathcal{P}}
\newcommand{\gK}{\mathcal{K}}
\newcommand{\gL}{\mathcal{L}}
\newcommand{\gS}{\mathcal{S}}
\newcommand{\gC}{\mathcal{C}}
\newcommand{\gO}{\mathcal{O}}
\newcommand{\gJ}{\mathcal{J}}
\newcommand{\gU}{\mathcal{U}}
\newcommand{\mm}{\mathfrak{m}}
\newcommand{\cP}{\mathcal P}
\newcommand{\cS}{\mathcal S}
\newcommand{\St}{\mathcal B}
\newcommand{\ovgL}{\overline{\gL}}
\newcommand{\ovS}{\overline{S}}

\newcommand{\dlim} {\varinjlim}
\newcommand{\ilim} {\varprojlim}

%Kategorier
\newcommand{\CM}{\text{CM}}
\newcommand{\Mon}{\text{Mon}}

%Kategorieer av komplekser

\newcommand{\Kom}{\text{Kom}}

% Begreper homologisk alebra

\newcommand{\EH}{{\mathbf H}}
\newcommand{\res}{\text{res}}
\newcommand{\Hom}{\text{Hom}}
\newcommand{\inhom}{{\underline{\text{Hom}}}}
\newcommand{\Ext}{\text{Ext}}
\newcommand{\Tor}{\text{Tor}}
\newcommand{\ghom}{\mathcal{H}om}
\newcommand{\gext}{\mathcal{E}xt}
\newcommand{\id}{\text{{id}}}
\newcommand{\im}{\text{im}\,}
\newcommand{\codim} {\text{codim}\,}
\newcommand{\resol}{\text{resol}\,}
\newcommand{\rank}{\text{rank}\,}
\newcommand{\lpd}{\text{lpd}\,}
\newcommand{\coker}{\text{coker}\,}
\newcommand{\supp}{\text{supp}\,}
\newcommand{\Ad}{A_\cdot}
\newcommand{\Bd}{B_\cdot}
\newcommand{\Fd}{F_\cdot}
\newcommand{\Gd}{G_\cdot}

%Avbildninger og andre symbolforkortelser

\newcommand{\sus}{\subseteq}
\newcommand{\sups}{\supseteq}
\newcommand{\pil}{\rightarrow}
\newcommand{\vpil}{\leftarrow}
\newcommand{\rpil}{\leftarrow}
\newcommand{\lpil}{\longrightarrow}
\newcommand{\inpil}{\hookrightarrow}
\newcommand{\pils}{\twoheadrightarrow}
\newcommand{\projpil}{\dashrightarrow}
\newcommand{\dotpil}{\dashrightarrow}
\newcommand{\adj}[2]{\overset{#1}{\underset{#2}{\rightleftarrows}}}
\newcommand{\mto}[1]{\stackrel{#1}\longrightarrow}
\newcommand{\vmto}[1]{\stackrel{#1}\longleftarrow}
\newcommand{\mtoelm}[1]{\stackrel{#1}\mapsto}

\newcommand{\eqv}{\Leftrightarrow}
\newcommand{\impl}{\Rightarrow}

\newcommand{\iso}{\cong}
\newcommand{\te}{\otimes}
\newcommand{\into}[1]{\hookrightarrow{#1}}
\newcommand{\ekv}{\Leftrightarrow}
\newcommand{\equi}{\simeq}
\newcommand{\isopil}{\overset{\cong}{\lpil}}
\newcommand{\equipil}{\overset{\equi}{\lpil}}
\newcommand{\ispil}{\isopil}
\newcommand{\vvi}{\langle}
\newcommand{\hvi}{\rangle}
\newcommand{\susneq}{\subsetneq}
\newcommand{\sgn}{\text{sign}}
\newcommand{\bihom}[2]{\overset{#1}{\underset{#2}{\rightleftarrows}}}

%Notasjonsforkortelser

\newcommand{\xd}{\check{x}}
\newcommand{\ortog}{\bot}
\newcommand{\tL}{\tilde{L}}
\newcommand{\tM}{\tilde{M}}
\newcommand{\tH}{\tilde{H}}
\newcommand{\tvH}{\widetilde{H}}
\newcommand{\tvh}{\widetilde{h}}
\newcommand{\tV}{\tilde{V}}
\newcommand{\tS}{\tilde{S}}
\newcommand{\tT}{\tilde{T}}
\newcommand{\tR}{\tilde{R}}
\newcommand{\tf}{\tilde{f}}
\newcommand{\ts}{\tilde{s}}
\newcommand{\tp}{\tilde{p}}
\newcommand{\tr}{\tilde{r}}
\newcommand{\tfst}{\tilde{f}_*}
\newcommand{\empt}{\emptyset}
\newcommand{\bfa}{{\bf a}}
\newcommand{\bfb}{{\bf b}}
\newcommand{\bfd}{{\bf d}}
\newcommand{\bfl}{{\bf \ell}}
\newcommand{\la}{\lambda}
\newcommand{\bfen}{{\mathbf 1}}
\newcommand{\ep}{\epsilon}
\newcommand{\en}{r}
\newcommand{\tu}{s}

\newcommand{\ome}{\omega_E}

\newcommand{\bevis}{{\bf Proof. }}
\newcommand{\demofin}{\qed \vskip 3.5mm}
\newcommand{\nyp}[1]{\noindent {\bf (#1)}}
\newcommand{\demo}{{\it Proof. }}
\newcommand{\demodone}{\demofin}
\newcommand{\parg}{{\vskip 2mm \addtocounter{theorem}{1}  
                   \noindent {\bf \thetheorem .} \hskip 1.5mm }}

\newcommand{\lcm}{{\text{lcm}}}

% Simplisielle komplekser

\newcommand{\dl}{\Delta}
\newcommand{\cdel}{{C\Delta}}
\newcommand{\cdelp}{{C\Delta^{\prime}}}
\newcommand{\dlst}{\Delta^*}
\newcommand{\Sdl}{{\mathcal S}_{\dl}}
\newcommand{\lk}{\text{lk}}
\newcommand{\lkd}{\lk_\Delta}
\newcommand{\lkp}[2]{\lk_{#1} {#2}}
\newcommand{\del}{\Delta}
\newcommand{\delr}{\Delta_{-R}}
\newcommand{\dd}{{\dim \del}}
\newcommand{\Del}{\Delta}
\newcommand{\PA}{\cS}
\newcommand{\Stbs}{\St \backslash \St_0}

%Monomialidealer
\renewcommand{\aa}{{\bf a}}
\newcommand{\bb}{{\bf b}}
\newcommand{\cc}{{\bf c}}
\newcommand{\xx}{{\bf x}}
\newcommand{\yy}{{\bf y}}
\newcommand{\zz}{{\bf z}}
\newcommand{\mv}{{\xx^{\aa_v}}}
\newcommand{\mF}{{\xx^{\aa_F}}}

%Standard notasjoner
\newcommand{\Symm}{\text{Sym}}
\newcommand{\pnm}{{\bf P}^{n-1}}
\newcommand{\opnm}{{\go_{\pnm}}}
\newcommand{\ompnm}{\omega_{\pnm}}

\newcommand{\pn}{{\bf P}^n}
\newcommand{\hele}{{\mathbb Z}}
\newcommand{\nat}{{\mathbb N}}
\newcommand{\rasj}{{\mathbb Q}}

\newcommand{\dt}{\bullet}
\newcommand{\st}{\hskip 0.5mm {}^{\rule{0.4pt}{1.5mm}}}              
\newcommand{\disk}{\scriptscriptstyle{\bullet}}

\newcommand{\cF}{F_\dt}
\newcommand{\pol}{f}

\newcommand{\Rn}{{\mathbb R}^n}
\newcommand{\An}{{\mathbb A}^n}
\newcommand{\frg}{\mathfrak{g}}
\newcommand{\PW}{{\mathbb P}(W)}

%Hibi
\newcommand{\pos}{{\mathcal Pos}}
\newcommand{\g}{{\gamma}}

%LPres
\newcommand{\Vaa}{V_0}
\newcommand{\Bp}{B^\prime}
\newcommand{\Bpp}{B^{\prime \prime}}
\newcommand{\bbp}{\mathbf{b}^\prime}
\newcommand{\bbpp}{\mathbf{b}^{\prime \prime}}
\newcommand{\bp}{{b}^\prime}
\newcommand{\bpp}{{b}^{\prime \prime}}
\newcommand{\pb}{\overline{p}}
\newcommand{\Pa}{P \backslash \{a \}}
\newcommand{\Min}{\text{Min}}
\newcommand{\Max}{\text{Max}}
\newcommand{\inc}{\text{inc}}
\newcommand{\tih}{\tilde{h}}
\newcommand{\can}{\omega_{\kr[\Delta(\gJ)]}}

\newcommand{\comment}[1]{{\color{blue} \sf ($\clubsuit$ #1 $\clubsuit$)}}

\def\CC{{\mathbb C}}
\def\GG{{\mathbb G}}
\def\ZZ{{\mathbb Z}}
\def\NN{{\mathbb N}}
\def\RR{{\mathbb R}}
\def\OO{{\mathbb O}}
\def\QQ{{\mathbb Q}}
\def\VV{{\mathbb V}}
\def\PP{{\mathbb P}}
\def\EE{{\mathbb E}}
\def\FF{{\mathbb F}}
\def\AA{{\mathbb A}}

\section*{Introduction}

Letterplace and co-letterplace ideals of a partially ordered set $P$
were introduced and studied in \cite{EHM} and \cite{FGH}.
In the latter paper it was shown that many monomial ideals 
studied in the literature derive from letterplace or co-letterplace
ideals as quotients of these ideals by a regular sequence of variable
differences. These ideals therefore allow a powerful unifying treatment
of many classes of ideals.  
\begin{itemize}
\item We present a little used and seemingly little known general method for 
constructing
the resolution of a monomial ideal with linear resolution. It is based 
on a result of K.~Yanagawa \cite[Theorem 4.2]{Yan2000}, 
using the canonical module of a Cohen-Macaulay
Stanley-Reisner ring $\kr[\Delta]$ to give the the linear resolution
of the Alexander dual ideal $I_{\Delta^\vee}$. We discuss in detail how
to compute this canonical module, in particular when  $\Delta$ is a
homology ball, Section \ref{resideals}.
\item This method is used to give a completely explicit and
very simple form of the resolutions of co-letterplace ideals. 
We discuss how the Eliahou-Kervaire resolution appears
as a special case, 
Section \ref{sec:ResCoLp}.
\item As a byproduct we obtain a very large class of simplicial
spheres generalizing Bier spheres, Section \ref{SphereSection}.
\end{itemize}

Given a poset $P$, the $n$'th letterplace ideal $L(n,P)$ is the
monomial ideal generated by monomials
\[ x_{1,p_1}x_{2,p_2} \cdots x_{n,p_n} \]
where $p_1 \leq p_2 \leq \cdots \leq p_n$.
The $n$'th co-letterplace ideal $L(P,n)$ is the monomial ideal generated
by monomials
\[ \Pi_{p \in P} x_{p,i_p}, \]
where $1 \leq i_p \leq n$ and $p < q$ implies $i_p \leq i_q$.
The association $p \mapsto i_p$ gives an isotone map
\[ P \pil \{1 < 2 < \cdots < n \} =: [n]. \]
 The set of these isotone maps, denoted by $\Hom(P,[n])$, 
naturally forms a partially ordered set. If 
$\gJ$ 
is a poset ideal in $\Hom(P,[n])$, we get a subideal
$L(\gJ)$ of $L(P,[n])$, the co-letterplace ideal of the poset ideal. 
In \cite{FGH} these ideals are shown to
have linear quotients and so a linear resolution.

The resolution of $L(P,[n])$ is given already in \cite{EHM}.
In this paper we give explicitly the linear resolution of 
$L(\gJ)$ for any poset ideal $\gJ \sus \Hom(P,[n])$.
Two techniques are commonly used in the literature to construct linear 
resolutions. Either one builds a cellular resolution or, if the ideal has
linear quotients and a regular decomposition function, one can construct an explicit 
resolution by \cite{HeTa}.
Here we advocate and use another general technique, 
building on previous work by Yanagawa \cite{Yan2000, Yan}. A feature of this method is
that it is more of an automatic machine than the two techniques above:
if an ideal $I$ in a polynomial ring
$S$ has linear resolution, 
its Alexander dual ideal $J$ is a Cohen-Macaulay ideal,
and so there is a canonical module $\omega_{S/J}$. The linear resolution
may be constructed directly from this module, as already stated by Yanagawa \cite[Corollary 4.2]{Yan2000}. An explicit knowledge
of the multiplication maps for this module 
enables an explicit description of the differential
in the resolution. In particular, when $I = L(\gJ)$, 
we obtain a simple, explicit description of
this canonical module. It has the somewhat extraordinary 
property that both it and its Alexander dual module identify
(in a multigraded way)
as ideals in (distinct) Stanley-Reisner rings: this is known to happen when $J$ is the 
Stanley-Reisner ring of a homology ball or a homology sphere, which is actually the case here. 
This fact enables a simple explicit form
of the linear resolution of co-letterplace ideals $L(\gJ)$. This in particular
specializes to various explicit linear resolutions in the literature,
like resolutions of strongly stable ideals generated in one
degree \cite{Sin}, \cite{NaRe}, resolutions of uniform face ideals
\cite{Cook}, resolutions of cointerval ideals \cite{Eng}, and 
resolutions of Ferrers ideals, \cite{NaRe}, \cite{NaCo}.

\medskip
A somewhat unintentional consequence of the fact that the canonical
module identifies as an ideal in its Stanley-Reisner ring (respecting
the multigrading)
is that we obtain a very large class of Gorenstein squarefree monomial ideals
of codimension $|P| +1$ giving rise to simplicial spheres.
For each poset $P$, natural number $n$ and nonempty poset ideal $\gJ$
in $\Hom(P,[n])$ we get such a complex. This class generalizes
Bier spheres, studied in \cite{BjZi}, which occur
in the special case when $P$ is an antichain and $n = 2$.

\medskip
The organization of the paper is as follows. In Section \ref{Sec:Pre} we recall
the definitions and basic properties of letterplace and co-letterplace ideals.
In Section \ref{StaircaseSection} we introduce the staircase complex
associated with a poset $P$ and a natural number $n$. This will be pivotal in 
computing our sought for canonical modules. Section \ref{resideals} 
presents the methods we use for computing linear resolutions of monomial
ideal. These methods are particularly nice when the ideals
are Alexander dual to Stanley-Reisner ideals of homology balls or spheres.
In Section \ref{sec:ResCoLp} we use this machinery to give the simple, 
explicit form of the
resolutions of co-letterplace ideals $L(\gJ)$. Section \ref{SphereSection} 
gives the large class of simplicial spheres.

\section{Letterplace and co-letterplace ideals of posets}
\label{Sec:Pre}
We recall the definitions of the above monomial ideals associated with a poset $P$
and a natural number $n$, and give basic properties
of these ideals which we will use. These ideals were introduced in 
\cite{EHM} and \cite{FGH}.

\medskip
Let $\kr$ be a field.
If $R$ is a set denote by $\kr[x_R]$ the polynomial ring 
$\kr[x_i]_{i \in R}$, and if $S \sus R$ denote by $m_S$ the squarefree
monomial $\Pi_{i \in S} x_i$.

If $P$ and $Q$ are finite posets, we denote by $\Hom(P,Q)$ the set
of isotone maps $\phi\colon P \pil Q$, i.e. maps such that $p \leq p^\prime$ implies
$\phi(p) \leq \phi(p^\prime)$. The graph of such a $\phi$ is
\[ \Gamma \phi = \{ (p,\phi(p) \, | \, p \in P \} \sus P \times Q. \]
Let $L(P,Q)$ be the ideal in $\kr[x_{P \times Q}]$ generated by
the monomials $m_{\Gamma \phi}$ with $\phi \in \Hom(P,Q)$. 

Let $[n] = \{1 < 2 < \cdots < n \}$ be the totally ordered set on
$n$ elements. The ideal $L([n],P)$ in $\kr[x_{[n] \times P}]$ is the
{\it $n$'th letterplace ideal} of $P$ and the ideal $L(P,[n])$ in
$\kr[x_{P \times [n]}]$ is the {\it $n$'th
co-letterplace ideal} of $P$. For short, we write these ideals as $L(n,P)$ and
$L(P,n)$ respectively. By 
\cite[Theorem 1.1]{EHM}, see also \cite[Proposition 1.2]{FGH}, 
the ideals $L(n,P)$ and $L(P,n)$ are Alexander dual ideals.

For the convenience of the reader we recall the notion
of Alexander duality.
If $I$ is a squarefree monomial ideal in $\kr[x_R]$, then the
Alexander dual ideal $J$ of $I$ is the monomial ideal consisting of all
monomials in $\kr[x_R]$ which have non-trivial common divisor
with every monomial in $I$. This association is an involution,
so $I$ will
also be the Alexander dual of $J$. We write $J = I^A$. 
%In \cite{EHM} it is shown that $L(n,P)$ and
%$L(P,n)$ are Alexander dual ideals.

\medskip
Note that $\Hom(P,Q)$ is itself a poset, where $\phi \leq \psi$ if and only if $\phi(p) \leq \psi (p)$ for all $p \in P$. Let $\gJ$ be
a poset ideal in the poset $\Hom(P,[n])$, i.e. $\psi \in \gJ$ implies
$\phi \in \gJ$ for all $\phi \leq \psi$. We get a subideal $L(P,n;\gJ)$
of $L(P,n)$
generated by all $m_{\Gamma \phi}$ with $\phi \in  \gJ$. 
In \cite[Proposition 5.1]{FGH} it is shown that $L(P,n;\gJ)$ 
has linear quotients, 
and so has linear resolution.  
(Hence its Alexander dual $L(P,n;\gJ)^A$ is
shellable and so a Cohen-Macaulay ideal.)
Our goal in this paper is to give explicitly the resolution
of $L(P,n;\gJ)$ (for short we write $L(\gJ)$),  
which we do in Section \ref{sec:ResCoLp}.
We mention that in the paper \cite{DFN-LP} we also investigate resolutions
of letterplace ideals $L(n,P)$. In \cite{Katt} homological properties
of the ideals $L(P,Q)$ in general are studied, like regularity, projective
dimension, and length of the first linear strand.

\section{The staircase complex} \label{StaircaseSection}
We introduce a simplicial complex
$\St(P,n)$, the {\it staircase complex} defined by a quadratic monomial
ideal $B(P,n)$, and give the basic properties
of the complex $\St(P,n)$. More generally for a poset ideal $\gJ$
in $\Hom(P,[n])$ we have a subcomplex $\St(\gJ)$ of $\St(P,n)$.

This complex has a central role when describing
the canonical modules of the Alexander duals of co-letterplace ideals.
These canonical modules are again what describes
the resolution of co-letterplace ideals, as we
do in Section \ref{sec:ResCoLp}.

\medskip
Given an isotone map $\phi \in \Hom(P,[n])$ we define a new isotone map
$\phi_-$ given by 
\[ \phi_-(p) = \max \{ \phi(q) \, | \, q < p \}. \]
We let $\max$ of the empty set be $1$, so that $\phi_-(p) = 1$ if
$p$ is minimal. Similarly we let $\phi^+$ be given by 
\[ \phi^+(p) = \min \{ \phi(q) \, | \, p < q \} \]
where $\min$ of the empty set is $n$, so that $\phi^+(p) = n$ if
$p$ is a maximal element of $P$. 

\begin{proposition}
The maps 
\[ \Hom(P,[n]) \bihom{ (-)_-}{(-)^+} \Hom(P,[n])\]
form a Galois correspondence between posets.
\end{proposition}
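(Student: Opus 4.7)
The plan is to recognise this as an order-theoretic adjunction and prove the single key equivalence
\[
\phi_- \leq \psi \iff \phi \leq \psi^+ \qquad \text{for all } \phi,\psi \in \Hom(P,[n]).
\]
This is precisely what it means for $(-)_-$ to be left adjoint to $(-)^+$, and the remaining features of a Galois correspondence then follow formally: both maps are automatically order-preserving, while the unit $\phi \leq (\phi_-)^+$ and counit $(\psi^+)_- \leq \psi$ drop out by specialising to $\psi = \phi_-$ and $\phi = \psi^+$ respectively.

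As a preliminary step I would check that $(-)_-$ and $(-)^+$ really do land in $\Hom(P,[n])$. For $\phi_-$, the point is that $p \leq p^\prime$ forces $\{q : q < p\} \subseteq \{q : q < p^\prime\}$, so taking max yields an isotone map, and the convention $\max \emptyset = 1$ is chosen precisely so that the formula still returns an element of $[n]$ at minimal elements of $P$. The argument for $\phi^+$ is order-dual, with $\min \emptyset = n$ playing the symmetric role at maximal elements.

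The heart of the proof is then the equivalence itself, which I expect to reduce to a single elementary condition. Unwrapping the left-hand side, $\phi_- \leq \psi$ says $\max\{\phi(q) : q<p\} \leq \psi(p)$ for every $p \in P$; when $p$ is minimal this reads $1 \leq \psi(p)$, which is automatic, so the condition collapses to
\[
(\star)\qquad \phi(q) \leq \psi(p) \text{ whenever } q < p \text{ in } P.
\]
Dually, $\phi \leq \psi^+$ reads $\phi(p) \leq \min\{\psi(q) : p<q\}$, which is automatic when $p$ is maximal and otherwise says $\phi(p) \leq \psi(q)$ for every $q > p$; after relabeling, this is exactly $(\star)$. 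I do not anticipate any real obstacle here: the whole argument is a careful definition-chase, with the only mildly delicate point being the empty-set conventions, which have been engineered precisely so that the adjunction behaves uniformly at the boundary cases.
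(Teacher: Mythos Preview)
Your proposal is correct and follows essentially the same route as the paper: both reduce the Galois correspondence to the single equivalence $\phi_- \leq \psi \iff \phi \leq \psi^+$, which unwinds to the common condition $\phi(q) \leq \psi(p)$ for all $q < p$. The paper proves one implication directly and declares the other ``similar'', while you symmetrise by exhibiting the shared condition $(\star)$; your extra check that $(-)_-$ and $(-)^+$ land in $\Hom(P,[n])$ is a welcome bit of care the paper omits.
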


\begin{remark}
If we consider $\Hom(P,[n])$ as a category, this simply
says that the above is an adjunction, with $(-)_-$ the left adjoint.
\end{remark}

\begin{proof}
We need to show that $\phi \leq \psi^+ $ if and only if $\phi_- \leq \psi$.
So suppose $\phi \leq \psi^+$. Fix $p$. Then for every $q < p$ we have
\[ \phi(q) \leq \psi^+(q) \leq \psi (p). \]
Hence $\phi_-(p) \leq \psi(p)$. The other direction is similar.
\end{proof}

We define the {\it upper normalization} of the isotone map $\phi$ to be 
$(\phi_-)^+$. 
If $\phi$ equals this normalization  we say $\phi$ is {\it upper normal}.
One may also define the {\it lower normalization} of $\phi$ to be
$(\phi^+)_-$.

\medskip
If $\phi$ is an isotone map let
\[ T_*(\phi) = \{ (p,i) \, | \, \phi(q) \leq i \leq \phi(p) 
\text{ for all } q < p \}. \]
Similarly let 
\[ T^*(\phi) = \{ (p,i) \, | \, \phi(p) \leq i \leq \phi(q) 
\text{ for all } q > p \}. \]

\begin{remark} \label{rem:Path-nm} If $P = [m]$ is totally ordered, then $\phi$ 
is an upper normal isotone
map in $\Hom([m], [n])$ if and only if $\phi(m) = n$. 
These are again precisely those isotone maps such that $T_*(\phi)$ 
is a lattice path from $(1,1)$ to $(m,n)$, i.e. a maximal chain
in $[m] \times [n]$. Similarly the lower normal isotone maps
are precisely those such that $\phi(1) = 1$ and they again correspond
precisely to those $\phi$ such that $T^*(\phi)$ is a lattice path
from $(1,1)$ to $(m,n)$. 
\end{remark}

%\begin{proposition}
%For each isotone map $\phi$ 
%\[T_*[(\phi_-)^+] = T^* [(\phi^+)_-]. \]
%\end{proposition}

\begin{proposition} Let $\phi$ be an upper normal map
and $\psi$ a lower normal map corresponding to each other by
the Galois correspondence.
That is, $\psi = \phi_- $ and $\psi^+ = \phi$. Then
\[T^*(\psi) = T_* (\phi).\]
\end{proposition}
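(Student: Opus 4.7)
The plan is to observe that, once one unpacks the definitions, the two sets $T^*(\psi)$ and $T_*(\phi)$ are described by literally the same inequalities in $(p,i)$, and the hypothesis $\psi = \phi_-$, $\psi^+ = \phi$ identifies the lower and upper bounds on each side.

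First I would rewrite $T_*(\phi)$ in a more compact form. The condition ``$\phi(q) \leq i$ for all $q < p$'' is, by definition of $\phi_-$ (using the convention $\max \emptyset = 1$), equivalent to $\phi_-(p) \leq i$. Hence
\[
T_*(\phi) \;=\; \{(p,i) \mid \phi_-(p) \leq i \leq \phi(p)\}.
\]
Similarly, the condition ``$i \leq \psi(q)$ for all $q > p$'' in the definition of $T^*(\psi)$ is, by the convention $\min \emptyset = n$, equivalent to $i \leq \psi^+(p)$, so
\[
T^*(\psi) \;=\; \{(p,i) \mid \psi(p) \leq i \leq \psi^+(p)\}.
\]

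Next I would substitute the Galois relations: by hypothesis $\psi(p) = \phi_-(p)$ and $\psi^+(p) = \phi(p)$ for every $p \in P$. Plugging these into the description of $T^*(\psi)$ yields exactly the description of $T_*(\phi)$, so $T^*(\psi) = T_*(\phi)$.

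There is no real obstacle here: the statement is essentially a bookkeeping identity, and the only thing one has to be slightly careful about is the boundary behavior at minimal and maximal elements of $P$, which is precisely handled by the conventions $\max \emptyset = 1$ and $\min \emptyset = n$ in the definitions of $\phi_-$ and $\phi^+$. These conventions ensure that the bounds $\phi_-(p) \leq i$ and $i \leq \psi^+(p)$ are the correct reformulations of the universally quantified conditions even when the quantification is vacuous.
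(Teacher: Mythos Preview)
Your proof is correct and slightly more direct than the paper's. The paper establishes two general inclusions valid for an arbitrary isotone map $\eta$, namely $T_*(\eta) \subseteq T^*(\eta_-)$ and $T^*(\eta) \subseteq T_*(\eta^+)$, and then chains them as
\[
T_*(\phi) \subseteq T^*(\phi_-) \subseteq T_*((\phi_-)^+) = T_*(\phi),
\]
using upper normality only at the last step. You instead rewrite $T_*(\phi)$ and $T^*(\psi)$ once and for all as $\{(p,i)\mid \phi_-(p)\le i\le \phi(p)\}$ and $\{(p,i)\mid \psi(p)\le i\le \psi^+(p)\}$, and then substitute the hypotheses $\psi=\phi_-$, $\psi^+=\phi$ to get equality in one stroke. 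Your route is shorter and makes the identity transparent; the paper's route has the minor advantage of recording the two inclusions for arbitrary $\eta$, though these are not invoked elsewhere. The only delicate point in your argument is the parsing of the definition of $T_*(\phi)$ so that the bound $i\le \phi(p)$ is required even when $p$ is minimal; this is indeed the intended reading in the paper (as one sees from its use in later proofs and from the lattice-path remark), and you handle it correctly.
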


\begin{proof}
We show for any isotone map $\eta$ that
i) $T_*(\eta) \subseteq T^*(\eta_-)$ and ii)  $T^*(\eta) \subseteq T_*(\eta^+)$.
This gives
\[ T_*(\phi) \sus T^*(\phi_-) \sus T_*((\phi_-)^+) = T_*(\phi). \]
Since $\phi_- = \psi$ the claim follows. We show only i) since ii) 
is analogous.
Suppose $(p,i)$ is in $T_*(\eta)$. By definition for all $q<p$ we have 
$\eta(q)\leq i\leq \eta(p)$.
This shows that $\eta_-(p)\leq i$. If $q'>p$ then 
$\eta_-(q')$ is $\max\{\eta(r)|r<q'\}$ and this is $\geq \eta(p)\geq i$.
Therefore for all $q'>p$ we have $\eta_-(p)\leq i\leq \eta_-(q')$. 
Hence $(p,i)$ is in $T^*(\eta_-)$.
\end{proof}

\begin{definition} Let $B(P,n)$ be the squarefree monomial  ideal
in $\kr[x_{P \times [n]}]$ generated by the quadratic monomials 
$x_{p,i}x_{q,j}$ such that $p < q$ and $i > j$. 
(Thus $B(P,n)$ is generated by pairs $(p,i) < (q,j)$ in 
$P \times [n]^{\text{op}}$ 
which are bistrict, i.e. strictly bigger in each coordinate.)
Let $\St(P,n)$ be the simplicial complex associated with this
squarefree monomial ideal. We call it the $n$'th {\it staircase complex}
(inspired by Remark \ref{rem:Path-nm} above).

The staircase complex is a simplicial complex on the vertex set $P \times [n]$.
Let $\St_0(P,n)$ be the subcomplex of $\St(P,n)$ consisting
of all faces $F \sus P \times [n]$ such that the projection down to $P$
is not surjective. Let $\Stbs(P,n)$ be the relative complex
$\St(P,n) \backslash \St_0(P,n)$. 
\end{definition}

\begin{proposition} \label{pro:StairFacet}
 We consider isotone maps $\phi$ in $\Hom(P,[n])$.
\begin{itemize}
\item[a.] $T_* \phi$ is a face of $\St(P,n)$ for each isotone map $\phi$.
\item[b.] Every face of $\St(P,n)$ is contained in $T_* \phi$ for some
$\phi$.
\item[c.] The maximal faces of $\St(P,n)$ are precisely the $T_* \phi$ where
$\phi$ are upper normal isotone maps.
\item[d.] The relative complex $\Stbs(P,n)$ is the disjoint union
of intervals $[\Gamma \phi, T_* \phi]$ as $\phi$ ranges over the
isotone maps.
\end{itemize}
\end{proposition}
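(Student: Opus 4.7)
The plan is to handle the four parts in order, leveraging the explicit description $T_*\phi = \{(p,i) : \phi_-(p) \leq i \leq \phi(p)\}$ and the Galois correspondence $(-)_- \dashv (-)^+$ just established.

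Part (a) is immediate: if $(p,i)$ and $(q,j)$ both lie in $T_*\phi$ with $p < q$ in $P$, then $i \leq \phi(p)$ while $\phi(p) \leq j$ (since $p$ is among the elements strictly below $q$), so $i \leq j$, and $T_*\phi$ contains no bistrict pair. For part (b), given a face $F$, I would set
\[
\phi(p) := \max\bigl(\{1\} \cup \{i : (r,i) \in F \text{ for some } r \leq p\}\bigr),
\]
which is isotone by construction. The inclusion $F \subseteq T_*\phi$ reduces both bounds to the face condition: for $(q,j) \in F$, the upper bound $j \leq \phi(q)$ holds by definition of $\phi$, while the lower bound $\phi_-(q) \leq j$ follows because any $(r,i) \in F$ with $r < q$ satisfies $i \leq j$ (else $(r,i), (q,j)$ would be a bistrict pair in the face $F$).

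For part (c), I would argue both directions. Suppose $\phi$ is upper normal, so $\phi = (\phi_-)^+$, and take $(p,i) \notin T_*\phi$; then either $i > \phi(p)$ or $i < \phi_-(p)$. In the first case $p$ must be non-maximal (otherwise $\phi(p) = n$), so upper normality gives $\phi(p) = \min_{q > p}\phi_-(q)$, attained by some $q_0 > p$; then $(q_0, \phi_-(q_0)) \in T_*\phi$ and, together with $(p,i)$, forms a bistrict pair since $\phi_-(q_0) = \phi(p) < i$. The second case is symmetric: pick $r_0 < p$ with $\phi(r_0) = \phi_-(p) > i$, and $(r_0, \phi(r_0)), (p,i)$ is bistrict. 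Hence $T_*\phi$ is maximal. Conversely, any maximal face equals $T_*\phi$ for some isotone $\phi$ by (a) and (b). The preceding proposition gives $T_*\phi \subseteq T^*(\phi_-) = T_*((\phi_-)^+)$; since the unit of the adjunction forces $\phi \leq (\phi_-)^+$, this inclusion is strict whenever $\phi \neq (\phi_-)^+$ (at any $p$ with $\phi(p) < (\phi_-)^+(p)$, the vertex $(p, (\phi_-)^+(p))$ lies in the larger set but not in $T_*\phi$). Maximality therefore forces $\phi = (\phi_-)^+$.

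For part (d), to each $F \in \Stbs(P,n)$ I associate $\phi_F(p) := \max\{i : (p,i) \in F\}$, which is well-defined because $F$ surjects onto $P$ and is isotone because $F$ contains no bistrict pair. The same argument as in (b) yields $\Gamma\phi_F \subseteq F \subseteq T_*\phi_F$. Uniqueness is automatic, since any isotone $\psi$ with $\Gamma\psi \subseteq F \subseteq T_*\psi$ is forced by the two inclusions to satisfy $\psi(p) = \max\{i : (p,i) \in F\}$. Conversely, every set in $[\Gamma\phi, T_*\phi]$ surjects onto $P$ (via $\Gamma\phi$) and is a face (as a subset of $T_*\phi$), hence belongs to $\Stbs(P,n)$. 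I expect the main delicacy to lie in part (c): the upper-normal hypothesis must be invoked exactly to extract, for each missing vertex, a specific bistrict partner already present in $T_*\phi$, whereas the remaining parts are essentially bookkeeping around the single face condition that ``$p < q$ forces $i \leq j$''.
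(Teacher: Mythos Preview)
Your proof is correct and, for parts (a), (b), and (d), runs along the same lines as the paper: in (b) your single formula $\phi(p)=\max(\{1\}\cup\{i:(r,i)\in F,\ r\le p\})$ is a compact variant of the paper's two-step definition (first on $p_1(F)$, then extended), and in (d) both you and the paper recover $\phi$ from $F$ as the fiberwise maximum.

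The one substantive difference is in part (c). The paper only argues one direction explicitly: it shows $T_*\phi\subseteq T_*((\phi_-)^+)$ for every $\phi$, and that this inclusion is strict when $\phi$ is not upper normal, so such $T_*\phi$ cannot be a facet. It does not spell out the converse, that every upper normal $\phi$ actually gives a facet (this needs, for instance, the observation that $T_*\phi\subseteq T_*\psi$ with both upper normal forces $\phi_-=\psi_-$ and hence $\phi=\psi$). Your direct maximality argument handles this cleanly: for any $(p,i)\notin T_*\phi$ you exhibit a bistrict partner already inside $T_*\phi$, using upper normality to produce the witness $q_0>p$ with $\phi_-(q_0)=\phi(p)$ in the case $i>\phi(p)$, and the attaining $r_0<p$ with $\phi(r_0)=\phi_-(p)$ in the case $i<\phi_-(p)$. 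This is a tidier and more self-contained treatment of that direction than the paper's.
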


\begin{proof}

a. Let $(q,j)$ and $(p,i)$ be in $T_* \phi$ with $q < p$. Then 
$j \leq \phi(q) \leq i \leq \phi(p)$. Therefore $T_*\phi$ is in $\St(P,n)$.

b. Let $F$ be a face of $\St(P,n)$. 
Consider the projection $p_1(F)$ of $F$ on $P$. 

For each $p$ in $p_1(F)$, 
let $\alpha(p) = \max \{ i \, | \, (p,i) \in F \}$. 
Then clearly $q < p$ in $p_1(F)$ implies $\alpha(q) \leq \alpha(p)$. For each 
$p$ not in $p_1(F)$, define 
\[ \alpha(p) = \max \{ \alpha(q) \, | \, q < p \text{ and } q 
\text{ is in } p_1(F) \}. \]
This makes $\alpha$ an isotone map defined on $P$. 
Also if $(p,i)$ is in $F$, then
for every $q < p$ with $q$ in $p_1(F)$, we must have $\alpha(q) \leq i$. 
Hence $\alpha(q) \leq i \leq \alpha(p)$ for {\it every} $q < p$
regardless of whether $q$ is in $p_1(F)$ or not. 
Thus $F$ is contained in $T_* \alpha$.

c. Let $(p,i) \in T_* \phi$. Then $\phi(q) \leq i \leq \phi(p)$
for all $q < p$. We have
\begin{align*}
\phi_-(p) & =  \max \{ \phi(q) \, | \, q < p \} \leq i & \\
(\phi_-)^+(q)  & = \min \{ \phi_-(r) \, | \, q < r \} \leq \phi_-(p).& 
\end{align*} 
Also 
\[ (\phi_-)^+(p) = \min \{ \phi_-(q) \,| \, p < q \} \geq \phi(p) \geq i \]
and so
\[ (\phi_-)^+(q) \leq i \leq (\phi_-)^+(p).\]
This gives $(p,i) \in T_*((\phi_-)^+)$.

Suppose that $\phi$ is not upper normal, i.e. $\phi < (\phi_-)^+$. 
Let $p$ be such that $\phi(p) < (\phi_-)^+(p) =: i$. Then
$(p,i)$ is in $T_*((\phi_-)^+)$ but not in $T_* \phi$ and so the
latter is not a facet.

d. Let $F$ be a face whose projection on $P$ is surjective.
Construct the map $\alpha$ as in part b. Then clearly 
$\Gamma \alpha \sus F$ and so $F$ is in the interval determined by 
$\alpha$, by the end of the argument of b. 
Thus the union of the $[\Gamma \phi, T_* \phi]$ is all of 
$\Stbs(P,n)$.
If  $\Gamma \phi \sus F \sus T_* \phi$, then
$(p,\phi(p)) \in F$ and 
$\phi(p) = \max \{ i \, | \, (p,i) \in F \}$. Thus $\phi$ is uniquely
determined from $F$, and so $F$ is in only one such interval.
\end{proof}

\begin{definition}
If $\gJ$ is a poset ideal in $\Hom(P,n)$, the complement $\gJ^c$
is a poset filter in $\Hom(P,n)$. It gives rise to a subideal
$L(\gJ^c)$ of $L(P,n)$, the ideal generated by all $m_{\Gamma \phi}$ 
where $\phi$ is in $\gJ^c$. 

  Let $B(\gJ)$ be the ideal $L(\gJ^c) + B(P,n)$ and
$\St(\gJ)$ the associated simplicial complex of this squarefree
monomial ideal, the {\it staircase complex associated with $\gJ$}. 
Note that $\St_0(P,n)$ is a subcomplex of 
$\St(\gJ)$ and so we can consider the relative complex
$\St(\gJ) \backslash \St_0(P,n)$.
\end{definition}

\begin{corollary} \label{StairFacetPosetIdeal}
The relative complex $\St(\gJ) \backslash \St_0(P,n)$ is the disjoint union
of intervals $[\Gamma \phi, T_* \phi]$ as $\phi$ ranges over the
isotone maps in $\gJ$.
\end{corollary}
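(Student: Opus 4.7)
The plan is to reduce to Proposition \ref{pro:StairFacet}(d), which already decomposes $\Stbs(P,n) = \St(P,n) \setminus \St_0(P,n)$ as a disjoint union of the intervals $[\Gamma\phi, T_*\phi]$ indexed by all isotone maps $\phi \in \Hom(P,[n])$. Since the ideal defining $\St(\gJ)$ is $B(\gJ) = L(\gJ^c) + B(P,n) \supseteq B(P,n)$, we have the containment of complexes $\St(\gJ) \subseteq \St(P,n)$, so the desired relative complex $\St(\gJ) \setminus \St_0(P,n)$ is automatically a sub-union of the intervals in the Proposition. The whole task is therefore to pin down which intervals survive, and I expect the answer to be \emph{exactly} those indexed by $\phi \in \gJ$.

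First I would record the face criterion. A face $F \in \Stbs(P,n)$ belongs to $\St(\gJ)$ precisely when no generator of $B(\gJ)$ divides $\xx^F$; the $B(P,n)$-generators are already excluded because $F \in \St(P,n)$, so the condition reduces to $\Gamma\psi \not\subseteq F$ for every $\psi \in \gJ^c$. Next I would use the uniqueness clause in Proposition \ref{pro:StairFacet}(d): every such $F$ lies in a unique interval $[\Gamma\phi, T_*\phi]$, with $\phi$ reconstructed as $\phi(p) = \max\{ i \mid (p,i) \in F\}$. In particular $\Gamma\phi \subseteq F$, so if $F \in \St(\gJ)$ then $\phi \notin \gJ^c$, i.e.\ $\phi \in \gJ$. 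This gives one direction.

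For the converse, fix $\phi \in \gJ$ and $F$ with $\Gamma\phi \subseteq F \subseteq T_*\phi$; I need to verify that no $\Gamma\psi$ with $\psi \in \gJ^c$ is contained in $F$. Suppose $\Gamma\psi \subseteq F \subseteq T_*\phi$. Then for every $p \in P$ the pair $(p,\psi(p))$ lies in $T_*\phi$, so by definition of $T_*\phi$ we have $\psi(p) \leq \phi(p)$, i.e.\ $\psi \leq \phi$ pointwise. Because $\gJ$ is a poset ideal in $\Hom(P,[n])$ and $\phi \in \gJ$, this forces $\psi \in \gJ$, contradicting $\psi \in \gJ^c$. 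Hence $F \in \St(\gJ)$, and the two directions together prove that $\St(\gJ) \setminus \St_0(P,n)$ is the disjoint union of $[\Gamma\phi, T_*\phi]$ over $\phi \in \gJ$.

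The steps are all short; the only mildly nontrivial point is the implication $\Gamma\psi \subseteq T_*\phi \Rightarrow \psi \leq \phi$, but this is immediate from the defining inequality $\phi(q) \leq i \leq \phi(p)$ (for $q<p$) in the definition of $T_*\phi$, applied at $i = \psi(p)$. Disjointness of the surviving intervals is inherited from Proposition \ref{pro:StairFacet}(d), so no additional argument is needed there.
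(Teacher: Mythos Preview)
Your proof is correct and follows essentially the same approach as the paper: reduce to Proposition~\ref{pro:StairFacet}(d) and then check that an interval $[\Gamma\phi, T_*\phi]$ survives in $\St(\gJ)$ if and only if $\phi \in \gJ$, the key point being that $\Gamma\psi \subseteq T_*\phi$ forces $\psi \leq \phi$. The paper argues this last step in its contrapositive form (pick $p$ with $\psi(p) > \phi(p)$ and observe $(p,\psi(p)) \notin T_*\phi$) and leaves the easy direction implicit, whereas you spell out both directions, but the substance is the same.
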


\begin{proof}
Let $\phi$ and $\psi$ be maps in $\gJ$ and $\gJ^c$ respectively.
The task is to show that $m_{T_* \phi}$ is not divisible by $m_{\Gamma \psi}$
or equivalently that $\Gamma \psi$ is not contained in $T_* \phi$. 
Since $\psi$ is not $\leq \phi$ there is some $p \in P$ with 
$j = \psi(p) > \phi(p)$. But then $(p,j)$ is in $\Gamma \psi$ and not
in $T_* \phi$. 
\end{proof}

\section{Squarefree ideals with linear resolution} \label{resideals}

For ideals with linear resolution, explicit resolutions have been
given in a number of places in the literature. Two approaches are common.
One is to give a cellular resolution. Another is when the ideal has linear
quotients and an extra 
condition is fulfilled, i.e. there is a regular decomposition function, 
see \cite{HeTa}.
The typical example of the latter is the Eliahou-Kervaire resolution.

 Here we advocate a third approach, using the canonical module.
%or its Alexander dual. We use this to construct a very 
%nice description of resolutions of co-letterplace ideals.
%In fact this approach may be used in
%most (and maybe all) cases in the literature 
%where a linear resolution has been given. 
Let $I$ be a monomial
ideal with linear resolution in a polynomial ring $S$, and $J$ the Alexander dual ideal.
Then $S/J$ is a Cohen-Macaulay ring by \cite{EaRe}, and so has
a canonical module $\omega_{S/J}$. Firstly, we recall a result of Yanagawa
\cite[Thm.4.2]{Yan2000} that 
the resolution of $I$ may
be constructed directly from $\omega_{S/J}$.  
Secondly we show how to get hold of the canonical module $\omega_{S/J}$.

The general methods here can be found in the literature, 
and we give 
appropriate pointers. However, they seem little known and have been little used,
so we think it of value to present them. In Section 4 we use
them very effectively to construct resolutions.
It also seems that the approach here may be used in most cases in the literature
%(maybe all?) 
where an explicit linear resolution of a monomial ideal 
has been given. 

\subsection{Squarefree modules and linear resolutions}
Let $A$ be a finite set. We consider the polynomial ring $\kr[x_A]$.
Let $\NN^A = \Hom(A,\NN)$. If $\bb$ is in $\NN^A$ write $b_i$ 
for $\bb(i)$ where $i \in A$. Let $\ep_i$ be the unit coordinate vector
with $\ep_i(j) = \delta_{ij}$ for $i,j$ in $A$. 
Let $M$ be a finitely generated $\NN^A$-graded module over the polynomial ring 
$\kr[x_A]$. The module $M$ is {\it squarefree} provided
for each multidegree $\bb \in \NN^A$ the multiplication map
\[ M_\bb \mto{x_i} M_{\bb + \ep_i} \]
is an isomorphism whenever $b_i \geq 1$. This means that $M$ is
essentially determined by the graded parts $M_\bb$ where $\bb$ is
a multidegree with $0$'s and $1$'s, and the multiplication maps
between these multigraded parts.

Let $\bfen$ be the multidegree $\sum_{i \in A} \ep_i$. % (1,1, \ldots, 1)$.
The {\it Alexander dual} of $M$ is the squarefree module $M^*$
where for $\bb \in \{0,1 \}^A$ the multigraded part $(M^*)_\bb$ equals  
$\Hom_\kr(M_{\bfen-\bb}, \kr)$. If $b_i = 0$, the multiplication 
\[ (M^*)_\bb \mto{x_i} (M^*)_{\bb + \ep_i} \]
is the dual of the multiplication
\[M_{\bfen-\bb-\ep_i} \mto{x_i} M_{\bfen-\bb}. \]
By obvious extension, this defines $(M^*)_\bb$ for all $\bb \in \NN^A$ and
all multiplications. The functor obtained in this way is exact and contravariant.

If $R$ is a subset of $A$, its indicator
vector  $\ep_R = \sum_{i \in R} \ep_i$ 
is a $0,1$-vector in $\NN^A$. We shall often write $M_R$
instead of $M_{\ep_R}$. Also let $R^c$ be the complement $A \backslash R$. 

Note that if $I$ is a squarefree monomial ideal, then the 
Alexander dual {\it module} of $I$ is $S/I^A$, where $I^A$
is the Alexander dual {\it ideal} of $I$.

If $R \sus A$ we denote by $S(-R)$ the free squarefree $S$-module with 
generator $e_R$ of degree $\ep_R$. It identifies naturally as the ideal
in $S$ generated by the monomial $m_R$. Its Alexander dual module is
$S/(x_i)_{i \in R}$. 

\medskip
Given a squarefree module K.~Yanagawa \cite[p.~297]{Yan} defines a complex
of free squarefree modules $\gL(M)$ whose terms are
\[ \gL_i(M) = \bigoplus_{|R^c| = i} (M_R)^\circ \te_{\kr} S, \]
where $(M_R)^\circ$ is $M_R$ but considered to have multidegree $R^c$. 
Put a total order on $A$. The differential is
\[ m^\circ \te S \mapsto \sum_{j \in R^c} (-1)^{\alpha(j,R)}(x_jm)^\circ \te x_js \]
where $\alpha(j,R)$ is the number of $i \in R$ such that $i < j$.
%after putting some total order on the index set $[n]$ of the variables.

The complex $\gL(M^*)$ associated with the Alexander dual of $M$
will then be the dual complex $\Hom_S(\gL(M), \omega_S)$,
where $\omega_S = S(-\bfen)$ is the canonical module of $S$.

\begin{proposition} \label{SqlinProMN}
A squarefree module $M$ is Cohen-Macaulay if and only if the Alexander dual
$M^*$ has linear resolution.
In this case let $M$ have codimension $d$,
and let $N = \Ext^d(M,\omega_S)$ be the dual module.
(This is also a Cohen-Macaulay squarefree module of codimension $d$.)

Then $\gL(N)$ is a resolution of the Alexander dual $M^*$. In
particular $M^*$ has $d$-linear resolution.
\end{proposition}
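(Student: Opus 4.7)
My plan is to trade the Cohen--Macaulayness of $M$ for the concentration of $\Ext^\bullet_S(M,\omega_S)$ in cohomological degree $d$, and then use the squarefree duality $\gL(K^*) = \Hom_S(\gL(K),\omega_S)$ (valid for any squarefree $K$, as stated before the proposition) to transport this into acyclicity of $\gL(N)$ above degree zero.

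First I would record that $\gL$ is an additive exact functor on the category of squarefree $S$-modules, since each term $\gL_i(-) = \bigoplus_{|R^c|=i} (-)_R^\circ \te_\kr S$ is $\kr$-linear and exact in its argument; thus short exact sequences of squarefree modules yield short exact sequences of complexes of free $S$-modules. Next I would verify the base case: for the squarefree module $M = S/(x_i)_{i\in T}$ (Cohen--Macaulay of codimension $|T|$), direct computation identifies $N$ with $\Hom_\kr(M,\kr)$ up to shift and identifies $\gL(N)$ with a Koszul-type complex resolving $M^* = S(-T^c)$, which is $|T^c|$-linear. Exactness then propagates the conclusion along a filtration of a general squarefree CM $M$ by such modules.

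The Cohen--Macaulay hypothesis enters through the identity
\[ \gL(N^*) = \Hom_S(\gL(N),\omega_S), \]
which identifies the cohomology of $\Hom_S(\gL(N),\omega_S)$ with that of $\gL(N^*)$. Using the piece-by-piece Matlis identification $N_R \iso \Hom_\kr(M_{R^c},\kr)$ (with the appropriate degree shift), one checks both that $N^*$ is itself squarefree Cohen--Macaulay and that $H_0(\gL(N))$ matches $M^*$ at each squarefree multidegree, with multiplication maps compatible. Acyclicity of $\gL(N)$ in positive degrees follows by dualising back, since $\gL(N^*)$ is concentrated in cohomological degree $d$ precisely when $M$ is Cohen--Macaulay of codimension $d$. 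The fact that the resolution is $d$-linear is then immediate from the construction: generators of $\gL_i(N)$ sit in multidegrees $\ep_{R^c}$ with $|R^c|=i$, and the differential raises the $S$-degree by exactly one.

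The main obstacle is the coherent multigraded bookkeeping behind the identification $H_0(\gL(N)) \iso M^*$. Matching $N = \Ext^d_S(M,\omega_S)$ with the Alexander dual of $M$ requires a concrete Koszul model of $\omega_S$, and the signs $(-1)^{\alpha(j,R)}$ appearing in the differential of $\gL$ must be reconciled with those coming from local duality; these are routine but painstaking. For the converse direction --- linearity of the resolution of $M^*$ implying Cohen--Macaulayness of $M$ --- I would reverse the same argument, reading off from $\gL(N) = \Hom_S(\gL(N^*),\omega_S)$ that $\Ext^i_S(M^*,\omega_S)$ vanishes for $i \neq d$, and then invoking local duality to conclude that $M$ is Cohen--Macaulay of codimension $d$.
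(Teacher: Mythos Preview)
The paper does not actually prove this proposition: it simply invokes \cite[Theorem~3.8]{Yan} and moves on. Your attempt at a direct argument is therefore more than the paper provides, but it has genuine gaps.

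A small point first: your base case is miscalculated. With the paper's conventions, for $M = S/(x_i)_{i\in T}$ one has $M^* = S(-T)$ (not $S(-T^c)$) and $d = |T|$, so the resolution is $|T|$-linear. In fact $N = \Ext^{|T|}(M,\omega_S)$ has a single nonzero squarefree piece $N_{T^c}=\kr$, so $\gL(N)$ is the free module $S(-T)$ sitting in homological degree $|T|$, not a Koszul complex.

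The substantive gap is the filtration step. A squarefree Cohen--Macaulay $M$ of codimension $d$ does filter by modules $S/(x_i)_{i\in T}$, but the cardinalities $|T|$ vary (for $M=\kr[\Delta]$ the $T$'s are complements of faces of all dimensions), so the subquotients are not Cohen--Macaulay of codimension $d$. Worse, the passage $M\mapsto N=\Ext^d(M,\omega_S)$ is not exact, so a short exact sequence in $M$ gives no short exact sequence in $N$ on which to use exactness of $\gL$. Your next paragraph tries to circumvent this via the identity $\gL(N^*)=\Hom_S(\gL(N),\omega_S)$ together with a ``Matlis identification'' $N_R \cong \Hom_\kr(M_{R^c},\kr)$, but that identification is false: it would force $N=M^*$, which already fails for $M=S/(x_1)$ in two variables (there $N_{\{2\}}=\kr$ while $M_{\{1\}}=0$). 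The acyclicity argument built on it is then circular.

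What makes the proof go through (and is what Yanagawa establishes) is a statement for \emph{arbitrary} squarefree $K$, namely
\[
H_i(\gL(K)) \;\cong\; \bigl(\Ext^i_S(K,\omega_S)\bigr)^{*}.
\]
Your $\delta$-functor idea works perfectly here: both sides are covariant homological $\delta$-functors in $K$ (the left because $\gL$ is exact, the right because Alexander duality is exact), both vanish in positive degrees on the projectives $S(-R)$, and they agree in degree~$0$ there since $\gL(S(-R))$ is the Koszul complex on $\{x_j:j\in R^c\}$ resolving $S/(x_j)_{j\in R^c} = (S(-R^c))^* = (\Hom_S(S(-R),\omega_S))^*$. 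Once this identity is in hand, set $K=N$: Cohen--Macaulayness of $M$ gives $\Ext^i(N,\omega_S)=0$ for $i\neq d$ and $\Ext^d(N,\omega_S)\cong M$ by biduality, so $\gL(N)$ has homology $M^*$ concentrated in degree $d$. The equivalence in the first sentence follows from the same identity together with the fact that $(-)^*$ reflects vanishing.
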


\begin{proof}
This follows by \cite[Theorem 3.8]{Yan}.
The functor $\gL$ is there denoted by $\gF$, and the resolution of
$N^*$ is denoted by $\gP^\dt$. 
%\tt{Check also Romer.}
\end{proof}

%\begin{remark} We see that the Eagon-Reiner theorem \cite{EaRe} that
%a Stanley-Reisner ring $S/J$ is Cohen-Macaulay iff  the Alexander
%dual ideal $I$ of $J$ has linear resolution, is an immediate
%consequence of this.
%\end{remark}

\medskip

Now let $I$ be a squarefree monomial ideal with linear resolution and let 
$J$ be the Alexander dual {\it ideal}. (Note that the Alexander dual of the 
squarefree {\it module} $I$ is $I^* = S/J$.) Then $J$ is 
a Cohen-Macaulay ideal by the above proposition and so has a squarefree
canonical module $\omega_{S/J}$. The following enables 
one to give the explicit resolution of $I$ if one has a sufficiently 
explicit description of the canonical module. 

\begin{proposition}[\cite{Yan2000}] \label{prop:SqLinRes} 
Let $I$ be an ideal with linear resolution,
and $J$ the Alexander dual ideal of $I$.
The resolution of $I$ is $\gL(\omega_{S/J})$. 
\end{proposition}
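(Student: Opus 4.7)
The plan is to deduce this directly from Proposition \ref{SqlinProMN} by choosing the right squarefree module to feed into it. Concretely, I would set $M := S/J$, viewed as a finitely generated $\NN^A$-graded squarefree $S$-module. The key observation is that the Alexander duality of squarefree modules, restricted to ideals versus quotients, recovers the Alexander duality of squarefree ideals: the paper has already noted that the Alexander dual module of the squarefree ideal $I$ is $S/I^A = S/J$, and since $(-)^*$ is an involution we have $M^* = (S/J)^* = I$.

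With this identification the hypotheses of Proposition \ref{SqlinProMN} are met: by assumption $M^* = I$ has linear resolution, so the proposition tells us that $M = S/J$ is Cohen-Macaulay as a squarefree module. Let $d = \codim M$. The dual module appearing in Proposition \ref{SqlinProMN} is then
\[
N \;=\; \Ext^d_S(M, \omega_S) \;=\; \Ext^d_S(S/J, \omega_S),
\]
and, since $S/J$ is Cohen-Macaulay of codimension $d$, this is by definition the canonical module $\omega_{S/J}$. The squarefree structure on $N$ matches the canonical squarefree structure on $\omega_{S/J}$.

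Now I invoke the final assertion of Proposition \ref{SqlinProMN}: $\gL(N)$ is a resolution of $M^*$. Substituting the two identifications $N = \omega_{S/J}$ and $M^* = I$ gives precisely that $\gL(\omega_{S/J})$ is a resolution of $I$, as claimed.

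The main (and really only) obstacle is a bookkeeping one: checking that the two notions of Alexander duality agree on the nose, so that $(S/J)^* = I$ as squarefree $S$-modules with the correct multigrading (rather than merely up to some twist by $\bfen$), and that the Ext-module carries the squarefree structure making it coincide with $\omega_{S/J}$ in the sense used inside $\gL(-)$. Both are standard for squarefree modules in the sense of Yanagawa, but without lining up the gradings one would not obtain the \emph{linear} resolution predicted by the statement.
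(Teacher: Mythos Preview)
Your proof is correct and is exactly the paper's approach: the paper's one-line proof simply says to apply Proposition \ref{SqlinProMN} with $M^* = I$, which is your choice $M = S/J$, and then $N = \Ext^d_S(S/J,\omega_S) = \omega_{S/J}$. Your additional remarks about matching the gradings and the squarefree structure on the Ext-module are legitimate sanity checks but are not needed beyond what Proposition \ref{SqlinProMN} already packages.
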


\begin{proof} 
This follows from Proposition \ref{SqlinProMN} by letting $M^*$ be $I$.
\end{proof}

\begin{remark}
The dual complex $\Hom_S(\gL(\omega_{S/J}), \omega_S)$ is $\gL(\omega^*_{S/J})$, so 
describing the resolution (i.e. the terms and differentials between them)
may equally well rely on a good description of $\omega^*_{S/J}$. 
\end{remark}
%This follows from \cite[Thm. 3.8]{Yan}. A special case of this theorem
%is the following. Let $M$ be a squarefree module with linear resolution
%(denoted $\gP^\dt$ in loc.cit.). Then the Alexander dual $M^*$
%is a Cohen-Macaulay module. Let $d$ be the dimension of the support of 
%$M^*$ and $\omega_{M^*} = \Ext^d(M^*, \omega_S)$ 
%its dual module. (The resolution of $\omega_{M^*}$ is in loc.cit.
%denoted $\gQ^\dt$. That $M^*$ is Cohen-Macaulay is equivalent to this
%complex $\gQ^\dt$ having only one cohomology module.)
%Then loc. cit. states that the resolution of 
%$M$ is $\gL(\omega_{M^*}}$. We then get the proposition above by 
%applying this to $M = I$.

\subsection{Describing the canonical module I: A general approach} 
\label{describingcanonical}
From the above proposition, if 
we have a good description of $\omega_{S/J}$ or its 
Alexander dual, we get a good description of the linear resolution 
of $I$. We now address how to describe $\omega_{S/J}$. 
Various sources do this. Chapter 5.7 in \cite{BrHe} is particularly 
useful and is our main reference here. First we describe a general
approach.
Let $\Delta$ be a simplicial complex on the set of vertices $A$ 
and $\kr[\Delta]$ its
Stanley-Reisner ring. The {\it enriched cochain complex} of $\Delta$
is $\gL(\kr[\Delta])$. If $\Delta$ has dimension $d$, this complex
is
\begin{equation} \label{SqlinLigResEnrich}
 \bigoplus_{\overset{|F| = d+1}{F \in \Delta}}
S(-F^c) \vpil
\bigoplus_{\overset{|F| = d} {F \in \Delta}} S(-F^c)
\vpil \cdots \vpil S(-\bfen).
\end{equation}
One may think of this as ``fattening'' up the standard augmented 
cochain complex of $\Delta$, see \cite[Definition 1.20]{StMi},
a complex of $\kr$-vector spaces, so
that it becomes a complex of $S$-modules. More generally the enriched
cochain complex (and chain complex) can be defined for any cell complex,
see \cite{FlCMCell}.

Here is a way to get hold of the canonical module.

\begin{proposition} \label{prop:SqLinCan}
Let $\Delta$ be a Cohen-Macaulay simplicial complex. Then 
the enriched cochain complex $\gL(\kr[\Delta])$ is a resolution of
$(\omega_{\kr[\Delta]})^*$, the Alexander dual of the canonical module.
\end{proposition}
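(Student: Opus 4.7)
The plan is to reduce the statement directly to Proposition \ref{SqlinProMN}, applied not to $\kr[\Delta]$ but rather to the canonical module $\omega_{\kr[\Delta]}$ itself; the rest is then bookkeeping plus one invocation of biduality.

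First, I would observe that since $\Delta$ is Cohen-Macaulay, $\kr[\Delta]$ is a Cohen-Macaulay squarefree $S$-module; let $d$ denote its codimension. Standard duality theory then guarantees that $\omega_{\kr[\Delta]} = \Ext^d_S(\kr[\Delta], \omega_S)$ is again a Cohen-Macaulay squarefree $S$-module of codimension $d$: the squarefreeness is automatic since $\Ext$ can be computed from a squarefree resolution against the squarefree module $\omega_S = S(-\bfen)$, and the Cohen-Macaulay property is classical (see \cite[Section 5.7]{BrHe}).

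Next, I would apply Proposition \ref{SqlinProMN} with $M := \omega_{\kr[\Delta]}$. The conclusion is that $\gL(N)$ is a resolution of $M^* = (\omega_{\kr[\Delta]})^*$, where
\[ N \;:=\; \Ext^d_S(\omega_{\kr[\Delta]}, \omega_S). \]

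The only remaining task is to identify $N$ with $\kr[\Delta]$ itself. This is the biduality statement for Cohen-Macaulay $S$-modules of codimension $d$: for any such module $K$ one has a canonical isomorphism $\Ext^d_S(\Ext^d_S(K, \omega_S), \omega_S) \cong K$, which I would simply quote from \cite{BrHe}. Taking $K = \kr[\Delta]$ yields $N \cong \kr[\Delta]$, and substituting back gives that $\gL(\kr[\Delta])$ resolves $(\omega_{\kr[\Delta]})^*$, as required. The whole argument rests on Proposition \ref{SqlinProMN}; once biduality is cited, no further work is needed, so there is really no serious obstacle beyond correctly bookkeeping the duality chain.
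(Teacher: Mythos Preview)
Your argument is correct and matches the paper's own proof exactly: apply Proposition~\ref{SqlinProMN} with $M = \omega_{\kr[\Delta]}$, then use biduality $\Ext^d_S(\omega_{\kr[\Delta]},\omega_S) \cong \kr[\Delta]$ to identify $N$ with $\kr[\Delta]$. The paper compresses this into a single sentence, but the content is identical.
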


\begin{proof}
This follows from Proposition \ref{SqlinProMN} by letting 
$M = \omega_{\kr[\Delta]}$ and $N = \kr[\Delta]$. 
\end{proof}

Let $P_F$ be the prime ideal generated by 
$(x_i \, | \, i \in F^c)$. The Alexander dual of $S(-F^c)$ is 
$S/P_F$.
If we take the Alexander dual of the complex (\ref{SqlinLigResEnrich}), we get an injective resolution  of $\omega_{\kr[\Delta]}$ in  
the category of squarefree modules:
\[ \omega_{\kr[\Delta]} \pil 
\bigoplus_{\overset{|F| = d+1}{F \in \Delta}}  S/P_F
\pil 
\bigoplus_{\overset{|F| = d}{F \in \Delta}}  S/P_F
\pil \cdots \pil S/{\mathfrak m}. \]
where $\mathfrak{m}$ is the homogeneous maximal ideal.
This resolution is given in \cite[Theorem 5.7.3]{BrHe}.

\subsection{Describing the canonical module II: Homology balls} 
\label{describingcanonical2}

When $\Delta$ is topologically a ball, the canonical module $\omega_{k[\Delta]}$
can be described explicitly as an ideal in the Stanley-Reisner ring 
$k[\Delta]$. This will be the case in our applications. We can even describe 
in full generality when the canonical module identifies
as an ideal in a multigraded way. We rely on Chapter 5.7 in \cite{BrHe}.
First we give necessary definitions.

%Let us now present a purely combinatorial method, originally described by Yanagawa \cite[Corollary 4.2, Remark 4.4.(a)]{Yan2000} under different hypotheses, to write down explicitly the resolution of ideals which are Alexander dual to Stanley-Reisner rings of homology balls and spheres. We start by recalling some definitions and results about these objects. 

\begin{definition}
Let $\Delta$ be a simplicial complex of dimension $d$ and let $\kr$ be a field. We say $\Delta$ is a \emph{homology sphere} over $\kr$ if, for every face $F \in \Delta$, one has that $\lk_{\Delta}F$ has the (reduced) homology over $\kr$ of a $\dim(\lk_{\Delta}F)$-dimensional sphere. In other words,
\[ \tH_i(\lk_{\Delta}F, \kr) \cong \begin{cases} \kr & i = \dim(\lk_{\Delta}F) \\
                               0 & i \neq \dim(\lk_{\Delta}F)
                   \end{cases}
\]
for every face $F$ of $\Delta$.
\end{definition}

\begin{remark}
Note that any simplicial complex which is topologically a sphere will
be a homology sphere.
Homology spheres are also known as \emph{Gorenstein*} complexes, i.e. complexes which are not cones and whose Stanley-Reisner ring is Gorenstein. Equivalently, homology spheres are the same as Cohen-Macaulay Eulerian complexes: see \cite[Chapter 5]{BrHe} for more details. 
\end{remark}

\begin{definition}
Let $\Delta$ be a simplicial complex of dimension $d$ and let $\kr$ be a field. We say $\Delta$ is a \emph{homology ball} over $\kr$ if there exists a subcomplex $\Sigma$ of $\Delta$ such that:
\begin{itemize}
\item $\Sigma$ is a $(d-1)$-dimensional homology sphere over $\kr$;
\item $\lk_{\Delta}F$ has the homology of a $\dim(\lk_{\Delta}F)$-dimensional sphere if $F \notin \Sigma$ and otherwise has zero homology.
\end{itemize}
Note that the facets of $\Sigma$ are those codimension $1$ faces of $\Delta$
which are contained in precisely one facet of $\Delta$. We therefore call $\Sigma$
the $\emph{boundary}$ of $\Delta$, and write $\partial\Delta = \Sigma$.
%The subcomplex $\partial\Delta$ is called the \emph{boundary} of $\Delta$. %The set (not a subcomplex!) $\mathrm{int}(\Delta) := \Delta \setminus \partial\Delta$ is called the \emph{interior} of $\Delta$.
\end{definition}

\begin{remark} Any simplicial complex which is topologically a
ball will be a homology ball.
\end{remark}

%Both homology spheres and balls can actually be characterized algebraically using the canonical module, as explained in \cite[Exercise 5.6.7 and Theorem 5.7.1]{BrHe}. We record below these results for further use.

The following gives a complete description of when the canonical module
of a simplicial complex, identifies in a multigraded way as an ideal
in the corresponding Stanley-Reisner ring.

\begin{theorem} \label{SRcanonical}
Let $\Delta$ be a Cohen-Macaulay simplicial complex on $n$ vertices. Then the canonical module $\omega_{\kr[\Delta]}$ can be seen as a $\ZZ^n$-graded ideal of $\kr[\Delta]$ precisely when $\Delta$ is either a homology sphere or a homology ball. More precisely:
\begin{itemize}
\item $\omega_{\kr[\Delta]} \cong \kr[\Delta]$ as $\ZZ^n$-graded $\kr[\Delta]$-modules if and only if $\Delta$ is a homology sphere;
\item $\omega_{\kr[\Delta]}$ is a proper $\ZZ^n$-graded ideal of $\kr[\Delta]$ if and only if $\Delta$ is a homology ball. In this case, $\omega_{\kr[\Delta]} \cong \bar{I}_{\partial\Delta}$, where $\bar{I}_{\partial\Delta}$ is the image inside $\kr[\Delta]$ of $I_{\partial\Delta}$, the Stanley-Reisner ideal of the boundary of $\Delta$.\end{itemize}
\end{theorem}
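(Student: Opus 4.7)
My plan is to pin down $\omega_{\kr[\Delta]}$ completely by its multigraded Hilbert function using the injective resolution provided by Proposition \ref{prop:SqLinCan}. Taking the Alexander dual of the enriched cochain complex $\gL(\kr[\Delta])$ yields an injective resolution
\[ 0 \to \omega_{\kr[\Delta]} \to \bigoplus_{\substack{|F|=d+1 \\ F \in \Delta}} S/P_F \to \bigoplus_{\substack{|F|=d \\ F \in \Delta}} S/P_F \to \cdots \]
of $\omega_{\kr[\Delta]}$ in the category of squarefree modules. Since $(S/P_F)_{\ep_G} = \kr$ precisely when $G \sus F$ and is zero otherwise, restricting to multidegree $\ep_G$ gives, up to a shift, the augmented cochain complex of $\lk_\Delta G$ (via the bijection $F \leftrightarrow F \backslash G$ between faces of $\Delta$ containing $G$ and faces of $\lk_\Delta G$). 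Combining this with Reisner's criterion (under which only the top reduced cohomology of each link survives in the Cohen-Macaulay case), I obtain
\[ \dim_\kr (\omega_{\kr[\Delta]})_G = \dim_\kr \tH^{d - |G|}(\lk_\Delta G; \kr), \qquad d = \dim \Delta. \]

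The next step is purely structural. A squarefree $\ZZ^n$-graded $\kr[\Delta]$-module whose multigraded components have dimension $\leq 1$ embeds as a $\ZZ^n$-graded ideal of $\kr[\Delta]$ if and only if the set $\Sigma := \{ G \in \Delta : (\omega_{\kr[\Delta]})_G = 0 \}$ is closed under taking subfaces, i.e.\ is a subcomplex. Indeed, closure under the ambient ring action translates exactly to this downward-closedness, and in that case the resulting ideal is precisely $\bar{I}_\Sigma$. So the characterization reduces to: \emph{(i)} each top cohomology $\tH^{d-|G|}(\lk_\Delta G; \kr)$ has dimension $0$ or $1$, and \emph{(ii)} the collection $\Sigma$ of $G$ for which this cohomology vanishes is a subcomplex.

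Under these hypotheses I would now split into two cases. If $\Sigma = \emptyset$, every link has top cohomology $\kr$, hence is a homology sphere, so $\Delta$ itself is a homology sphere and $\omega_{\kr[\Delta]} \cong \kr[\Delta]$; the converse is immediate from the formula. If $\Sigma$ is a nonempty subcomplex, then faces outside $\Sigma$ have spherical links and faces inside $\Sigma$ have acyclic links. Since $\kr[\Sigma] \iso \kr[\Delta]/\omega_{\kr[\Delta]}$, comparing Krull dimensions forces $\dim \Sigma = d - 1$. The main obstacle lies here: one must also verify that $\Sigma$ is itself a $(d-1)$-dimensional homology sphere, so that $(\Delta, \Sigma)$ genuinely forms a homology ball. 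I expect this to follow by running the same link-cohomology analysis on $\Sigma$, using that its Stanley-Reisner ring is Cohen-Macaulay of dimension $d$ and applying the long exact sequence of the pair $(\lk_\Delta G, \lk_\Sigma G)$ (or equivalently a Mayer-Vietoris computation) to transfer the acyclicity data from $\Delta$ to spherical data on $\Sigma$. The converse direction for homology balls is a direct computation: links of interior faces are spheres (giving $\omega_G = \kr$), links of boundary faces are acyclic (giving $\omega_G = 0$), and $\partial \Delta$ is a subcomplex by definition, so $\omega_{\kr[\Delta]} \iso \bar{I}_{\partial \Delta}$.
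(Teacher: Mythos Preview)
The paper does not actually prove this theorem: it is stated with a pointer to Chapter~5.7 of \cite{BrHe}, and the remark immediately after explains how the homology-ball case is a reformulation of their Theorem~5.7.1. So there is no argument in the paper to compare against; you are supplying a proof where the authors defer to the literature. Your approach via the multigraded Hilbert function of $\omega_{\kr[\Delta]}$ is the natural one and is essentially how the results in \cite{BrHe} are obtained.

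Two points deserve tightening. First, your structural claim---that a squarefree $\kr[\Delta]$-module $M$ with $\dim_\kr M_G \le 1$ embeds as a $\ZZ^n$-graded ideal iff $\{G: M_G=0\}$ is a subcomplex---is false for arbitrary $M$: one also needs the multiplication maps $M_G \to M_H$ to be isomorphisms whenever $G\subseteq H$ are faces with both pieces nonzero. For the ``only if'' direction of the theorem this is harmless (both conditions are forced once $\omega$ \emph{is} an ideal), but for the converse you must invoke Gr\"abe's description of the $\kr[\Delta]$-module structure on $\omega_{\kr[\Delta]}$, in which these maps are the natural maps in link cohomology and hence isomorphisms under the homology ball/sphere hypotheses.

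Second, the step you flag as the ``main obstacle''---that $\Sigma$ is a $(d{-}1)$-dimensional homology sphere---is handled far more cleanly by commutative algebra than by the long exact sequence of a pair. Once $\omega\hookrightarrow\kr[\Delta]$ is a proper ideal, note that $\omega_F=\tH^{-1}(\{\emptyset\})=\kr$ for every facet $F$, so $\Sigma$ misses all facets; together with the depth lemma on $0\to\omega\to\kr[\Delta]\to\kr[\Sigma]\to 0$ this gives $\dim\kr[\Sigma]=\text{depth}\,\kr[\Sigma]=d$. Now apply $\Hom_{\kr[\Delta]}(-,\omega)$ to that sequence: using $\Hom(\omega,\omega)\cong\kr[\Delta]$ and $\Ext^1(\kr[\Delta],\omega)=0$ one finds
\[
\omega_{\kr[\Sigma]}\;\cong\;\Ext^1_{\kr[\Delta]}(\kr[\Sigma],\omega)\;\cong\;\coker\bigl(\omega\hookrightarrow\kr[\Delta]\bigr)\;=\;\kr[\Sigma]
\]
as $\ZZ^n$-graded modules, with no shift (this is essentially \cite[Proposition~3.3.18]{BrHe} carried out multigradedly). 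Feeding this back into your own Hilbert-function formula, now applied to $\Sigma$, yields $\tH^{\,d-1-|G|}(\lk_\Sigma G;\kr)\cong\kr$ for every $G\in\Sigma$, i.e.\ $\Sigma$ is a homology sphere. This replaces the proposed Mayer--Vietoris argument entirely and closes the gap.
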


\begin{remark}
Theorem 5.7.1 in \cite{BrHe} actually says that the canonical module of a homology ball $\Delta$ identifies as the image inside $\kr[\Delta]$ of the monomial ideal $\mathfrak{a} = (x^F \mid F \in \Delta, F \notin \partial\Delta)$; however, it is easy to see that $\mathfrak{a} + I_{\Delta} = I_{\partial\Delta}$, whence our reformulation.
\end{remark}

%\begin{remark} \label{dualcanonical}
Whenever $\Delta$ is either a homology ball or a homology sphere, both $\omega_{\kr[\Delta]}$ and its dual $\omega_{\kr[\Delta]}^*$ can be seen as $\mathbb{Z}^n$-graded ideals inside some Stanley-Reisner ring. More precisely, since the following sequence
\begin{equation} \label{boundarysequence}
0 \mapsto I_{\Delta} \to I_{\partial\Delta} \to \kr[\Delta] \to \kr[\partial\Delta] \to 0
\end{equation}
is exact, by applying the Alexander duality functor we get another exact sequence
\[0 \vpil \kr[\Delta^A] \vpil \kr[(\partial\Delta)^A] \vpil I_{\Delta}^A \vpil I_{\partial{\Delta}}^A \vpil 0\]
and, since by Theorem \ref{SRcanonical} $\omega_{\kr[\Delta]}$ identifies as the image of $I_{\partial\Delta}$ inside $\kr[\Delta]$, we get that $\omega_{\kr[\Delta]}^*$ identifies as the image of $I_{\Delta}^A$ inside $\kr[(\partial\Delta)^A]$.

The sequence \eqref{boundarysequence} is only one of the possible liftings of the exact sequence of $\kr[\Delta]$-modules

\begin{align} \label{eq:LinresCanBound}
0 \to \omega_{\kr[\Delta]} \to \kr[\Delta] \to \kr[\partial\Delta] \to 0
\end{align}
(where all maps are the natural ones). One may use other presentations of the
$S$-module $\omega_{\kr[\Delta]}$, as we shall do later. We are then modifying 
the two leftmost objects of \eqref{boundarysequence}, but not the two rightmost
ones. In particular, $\kr[\partial\Delta]$ will appear in the sequence for 
any choice of a representative of $\omega_{\kr[\Delta]}$. We use this
in the discussion before Theorem \ref{explicitboundary}.

\medskip
To recover the notation of the beginning of this section, let $I$ be a squarefree monomial ideal with linear resolution and let $J$ be its Alexander dual ideal.
By Theorem \ref{SRcanonical}, we have a complete understanding of the canonical module $\omega_{S/J}$ when $J$ is the Stanley-Reisner ideal of either a homology sphere or a homology ball. The multigraded structure is completely characterized by knowing the faces of the simplicial complex and, in the homology ball case, of its boundary. This implies that in these cases we can find a complete combinatorial description (maps included!) of the minimal free resolution of $I$. 

%We remark that Yanagawa \cite[Remark 4.4.(a)]{Yan2000} had stated this result 
%under different hypotheses, namely requiring the geometric realization of 
%$\Delta$ to be a manifold with Gorenstein boundary. 

\begin{theorem} \label{specialres}
Let $\Delta$ be a $d$-dimensional Cohen-Macaulay simplicial complex and let $I$ denote the ideal which is Alexander dual to the Stanley-Reisner ideal of $\Delta$. Then:
\begin{itemize}

\item if $\Delta$ is a homology sphere, then $I$ is minimally resolved by $\gL(\kr[\Delta])$, the enriched cochain complex (see Subsection \ref{describingcanonical});

\item if $\Delta$ is a homology ball, let $\bar{I}_{\partial\Delta}$ be the image inside $\kr[\Delta]$ of $I_{\partial\Delta}$, the Stanley-Reisner ideal of the boundary of $\Delta$. Then $I$ is minimally resolved by $\gL(\bar{I}_{\partial\Delta})$, i.e. the following complex:
\[ \bigoplus_{\overset{|F| = d+1}{F \in \Delta,~F \notin \partial\Delta}}
S(-F^c) \vpil
\bigoplus_{\overset{|F| = d} {F \in \Delta,~F \notin \partial\Delta}} S(-F^c)
\vpil \cdots \vpil \bigoplus_{\overset{v~\text{vertex of}~\Delta}{v \notin \partial\Delta}} S(-v^c) \vpil 0.\]
\end{itemize}
\end{theorem}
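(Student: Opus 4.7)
The proof essentially assembles the machinery already developed in this section, so I expect the main work to be in bookkeeping rather than in any single delicate argument. The plan is to apply Proposition~\ref{prop:SqLinRes} to reduce the problem to identifying the canonical module $\omega_{\kr[\Delta]}$, and then to invoke Theorem~\ref{SRcanonical} to describe this module in each of the two cases.

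First, since $\Delta$ is Cohen-Macaulay, the Eagon-Reiner theorem guarantees that $I = I_\Delta^A$ has a linear resolution, so Proposition~\ref{prop:SqLinRes} tells us that this resolution is exactly the complex $\gL(\omega_{\kr[\Delta]})$. It therefore suffices, in each case, to identify the squarefree module $\omega_{\kr[\Delta]}$, read off its $\ZZ^n$-graded squarefree pieces, and plug the result into Yanagawa's formula $\gL_i(M) = \bigoplus_{|R^c|=i}(M_R)^\circ \otimes_{\kr} S$ from Subsection~\ref{describingcanonical}.

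In the homology sphere case, Theorem~\ref{SRcanonical} gives $\omega_{\kr[\Delta]} \cong \kr[\Delta]$ as $\ZZ^n$-graded modules, hence $\gL(\omega_{\kr[\Delta]}) = \gL(\kr[\Delta])$, which is by definition the enriched cochain complex~\eqref{SqlinLigResEnrich}. In the homology ball case, Theorem~\ref{SRcanonical} identifies $\omega_{\kr[\Delta]}$ with $\bar{I}_{\partial\Delta}$. The $R$-th squarefree piece of this module is nonzero precisely when $R \in \Delta$ and $R \notin \partial\Delta$: monomials in $I_{\partial\Delta}$ are supported off $\partial\Delta$, and passing to the quotient $\kr[\Delta]$ further forces support to lie inside $\Delta$. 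Substituting these pieces into Yanagawa's formula, and relabelling the homological index $i=|R^c|$ by face size $|F|$, reproduces the complex displayed in the statement, with the leftmost term indexed by the facets of $\Delta$ and the rightmost by the interior vertices.

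Finally, minimality is automatic. By Yanagawa's definition of the differential in $\gL(M)$, every nonzero matrix entry is $\pm x_j$ for some vertex $j$, hence lies in the maximal ideal $\mathfrak{m}$; so as soon as the complex is a resolution, it is already a minimal one. The only genuinely fiddly point I anticipate is the last one: making sure the squarefree pieces of $\bar{I}_{\partial\Delta}$ are faithfully matched with the faces $F \in \Delta \setminus \partial\Delta$, and that the index translation $|R^c| \leftrightarrow |F|$ lines up the two endpoints of the complex correctly.
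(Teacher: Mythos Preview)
Your proposal is correct and follows exactly the same approach as the paper: the paper's proof is the single sentence ``This is a direct consequence of Proposition~\ref{prop:SqLinRes} and Theorem~\ref{SRcanonical},'' and you have simply unpacked that sentence, including the identification of the squarefree pieces of $\bar{I}_{\partial\Delta}$ and the observation about minimality, neither of which the paper spells out.
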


\begin{proof}
This is a direct consequence of Proposition \ref{prop:SqLinRes} and Theorem \ref{SRcanonical}.
\end{proof}

\begin{remark} Yanagawa \cite[Remark 4.4.(a)]{Yan2000} states this result
under a more restrictive hypothesis, namely requiring the geometric
realization of $\Delta$ to be a manifold with Gorenstein* boundary.
That the canonical module $\omega_{\kr[\Delta]}$ then identifies as a 
$\hele^n$-graded ideal in $\kr[\Delta]$ is an (unpublished) result of Hochster,
see \cite[Theorem 5.7.2]{BrHe}.
\end{remark}

Note that we do not need any shellability hypothesis in the theorem above.
\begin{example}
Let $\Delta$ be the non-shellable 3-dimensional homology ball on 9 vertices and 18 facets constructed by F.~H.~Lutz in \cite{LutzBall}. The complete $f$-vector is (1, 9, 33, 43, 18).

Macaulay2 \cite{Macaulay2} computations show that the boundary of this ball has $f$-vector (1, 9, 21, 14).

Then one checks again by Macaulay2 that the resolution of $I$ is 
\[S(-5)^{18} \vpil S(-6)^{29(=43-14)} \vpil S(-7)^{12(=33-21)} \vpil 0,\]
as prescribed by Theorem \ref{specialres}. One may of course recover the maps and the multigraded structure of the resolution as well.
\end{example}

\begin{remark}
When $\Delta$ is a doubly Cohen-Macaulay module, or equivalently $\kr[\Delta]$ 
is a level Cohen-Macaulay ring wih nonzero $\tH_{d-1}(\Delta;\kr)$,
then $\omega_{\kr[\Delta]}$ can be described quite 
explicitly also, see Remark 4.4.(b) in \cite{Yan2000} and
Corollary 5.7 in \cite{BrHe}.
\end{remark}

\section{Resolutions of co-letterplace ideals}
\label{sec:ResCoLp}

In \cite[Proposition 5.1]{FGH} it is shown that the co-letterplace ideal 
$L(P,n;\gJ)$
for a poset ideal $\gJ \sus \Hom(P,[n])$ has
a linear resolution. In \cite[Theorem 3.6]{EHM} an explicit form of the linear
resolution of the co-letterplace ideal $L(P,n)$ is given,
by using that $L(P,n)$ has linear quotients and a regular
decomposition function. We also mention that
\cite{Bol} gives a recursive formula for the Betti numbers
of $L(P,n)$ in terms of the Betti numbers of $L(\alpha,n-1)$
as $\alpha$ varies over poset ideals in $P$.

Here we give more generally the explicit form of  the linear
resolution of $L(P,n;\gJ)$ with our approach via the canonical module,
Proposition \ref{prop:SqLinRes}, using Proposition \ref{prop:SqLinCan}
to describe the canonical module explicitly. In \cite{FGH} the second
author et al.~show
that many ideals with linear resolution are regular quotients by
variable differences of 
co-letterplace ideals $L(P,n;\gJ)$. In the literature many of these
resolutions are given in explicit form: \cite{Sin} and 
\cite{NaRe} contain cellular resolutions of strongly stable ideals, and
polarizations thereof, \cite{NaCo} and \cite{NaRe} give
cellular resolutions of Ferrers ideals, \cite{Cook} gives 
cellular resolutions of uniform face ideals, and \cite{Eng} gives cellular
resolutions of cointerval ideals.
%and \cite{Hor} gives explicit resolutions
%of edge ideals under certain conditions.

A consequence of our result is that we obtain these resolutions
in a new, unified, very simple form,
and our situation is also considerably more general. We give a more specific
discussion of this for resolutions of strongly stable ideals in Subsection 
\ref{subsec:ResCoLpStSt}.

\subsection{The dual of the canonical module}
\label{subsec:DualCanonical}
Since $L(P,n;\gJ)$ has linear resolution, 
the Alexander dual $L(P,n;\gJ)^A$, which we denote as 
$L(\gJ;n,P)$, is a Cohen-Macaulay ideal.
An explicit description of this ideal, which has codimension $|P|$, is given in \cite[Theorem 5.8]{FGH}.

\begin{definition}
We denote by $\Delta(\gJ;n,P)$ (or for short $\Delta(\gJ)$) 
the simplicial complex whose
Stanley-Reisner ideal is the Alexander dual $L(\gJ;n,P)$. 
This is then a Cohen-Macaulay simplicial complex, 
in fact shellable, since $L(\gJ) = L(P,n;\gJ)$ has linear quotients.
\end{definition}

Let $\omega_{\kr[\Delta(\gJ)]}$ be the canonical module of the 
Stanley-Reisner ring $\kr[\Delta(\gJ)]$.
%$\kr[x_{P \times [n]}]/L(\gJ)^A$.
%$\kr[x_{P \times [n]}]/L(\gJ)^A$. 
We now describe this canonical module, 
or rather its Alexander dual module.
Recall that $B(\gJ) = L(\gJ^c) + B(P,n)$ where  
$B(P,n)$ is the ideal generated by the quadratic monomials
$x_{p,i} x_{q,j}$ where $p < q$ and $i > j$. We define $L(\gJ|\St)$ to be
the image of the composition
\[ L(\gJ) \pil \kr[x_{P \times [n]}] \pil \kr[x_{P \times [n]}]/B(\gJ), \]
a subquotient of $\kr[x_{P \times [n]}]$. Here is our simple description
of the canonical module $\omega_{\kr[\Delta(\gJ)]}$.

\begin{proposition} \label{pro:ResCoLpCanD}
The canonical module $\omega_{\kr[\Delta(\gJ)]}$ is the Alexander dual module
of $L(\gJ|\St)$.
\end{proposition}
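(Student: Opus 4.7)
The plan is to realize $\omega_{\kr[\Delta(\gJ)]}$ concretely via Theorem \ref{SRcanonical} and then identify its Alexander dual module with $L(\gJ|\St)$ by using the staircase decomposition of Corollary \ref{StairFacetPosetIdeal} to match multigraded pieces.

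First I would show that $\Delta(\gJ)$ is a homology ball with an explicitly described boundary. Since $L(\gJ)$ has linear quotients by \cite[Proposition 5.1]{FGH}, the complex $\Delta(\gJ)$ is shellable, hence Cohen--Macaulay; a closer analysis of the linear-quotients shelling---likely by induction on $|\gJ|$, peeling off maximal elements of $\gJ$---should show $\Delta(\gJ)$ to be topologically a ball whose boundary facets can be read off from the shelling order. Once that is in place, the discussion preceding Theorem \ref{specialres} identifies $\omega_{\kr[\Delta(\gJ)]}^*$ with the image of $L(\gJ) = I_{\Delta(\gJ)}^A$ inside $\kr[(\partial\Delta(\gJ))^A]$. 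Since $L(\gJ|\St)$ is by definition the image of $L(\gJ)$ inside $\kr[\St(\gJ)] = S/B(\gJ)$, the proposition reduces to the combinatorial identity of simplicial complexes
\[ \St(\gJ) = (\partial\Delta(\gJ))^A, \qquad \text{equivalently} \qquad B(\gJ) = I_{\partial\Delta(\gJ)}^A. \]

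To verify this identity I would use Corollary \ref{StairFacetPosetIdeal}, which writes $\St(\gJ) \setminus \St_0(P,n)$ as the disjoint union of intervals $[\Gamma\phi, T_*\phi]$ for $\phi \in \gJ$, and match the two families of generators of $B(\gJ)$---the monomials $m_{\Gamma\phi}$ for $\phi \in \gJ^c$, and the bistrict quadrics from $B(P,n)$---with the minimal nonfaces of the Alexander dual $(\partial\Delta(\gJ))^A$. The first family should dualize to those facets of $\Delta(\gJ)$ corresponding to maps outside $\gJ$, while the bistrict quadrics should encode precisely the codimension-one faces that sit in a single facet, i.e.\ the facets of $\partial\Delta(\gJ)$. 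Since both $\omega_{\kr[\Delta(\gJ)]}$ and $L(\gJ|\St)^*$ are squarefree modules that are at most one-dimensional in each multidegree, the multiplication-by-$x_{p,i}$ maps automatically agree once the supports do, so the identification of multigraded pieces upgrades to an isomorphism of $\ZZ^{P\times[n]}$-graded $S$-modules.

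The main obstacle is the first step: rigorously proving that $\Delta(\gJ)$ is a homology ball (not merely Cohen--Macaulay) and pinning down its boundary in a way that meshes with the staircase data. This will likely require a careful combinatorial description of which codimension-one faces of $\Delta(\gJ)$ are contained in exactly one facet, guided either by induction on the poset ideal $\gJ$ or by a direct pairing with the bistrict pairs defining $B(P,n)$, so that the two sides of the sought identity $B(\gJ)=I_{\partial\Delta(\gJ)}^A$ can be compared generator by generator.
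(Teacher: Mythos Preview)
Your strategy reverses the paper's logical order. The paper proves the proposition \emph{directly}, by a bare-hands cokernel computation: it writes down the beginning of the enriched cochain complex $\gL(\kr[\Delta(\gJ)])$ (Proposition~\ref{prop:SqLinCan}), defines the natural map $\bigoplus_{\phi\in\gJ} S(-\Gamma\phi)\to S/B(\gJ)$ sending $e_{\Gamma\phi}\mapsto m_{\Gamma\phi}$, checks that it kills the image of the first differential $d$, and then shows by an explicit reduction (ordered by the poset structure on the $\phi$'s) that any relation $\sum r_\phi e_{\Gamma\phi}$ mapping to zero in $S/B(\gJ)$ is already a boundary. No ball structure, no boundary identification, no appeal to Theorem~\ref{SRcanonical} is needed. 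In fact the paper only \emph{afterwards} uses this proposition to deduce that $\Delta(\gJ)$ is a ball and to compute $\partial\Delta(\gJ)$ (Theorems~\ref{thm:BierBall} and~\ref{explicitboundary}).

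Your reduction contains a genuine error. The identity $B(\gJ)=I_{\partial\Delta(\gJ)}^A$ that you propose to verify is false in general: by Theorem~\ref{explicitboundary} one has $I_{\partial\Delta(\gJ)}=L(\gJ)^A+B(\gJ)^A$, whence $I_{\partial\Delta(\gJ)}^A=L(\gJ)\cap B(\gJ)$, and this equals $B(\gJ)$ only if $B(\gJ)\subseteq L(\gJ)$, which already fails for the bistrict quadrics of $B(P,n)$ whenever $|P|\geq 2$ (they have degree $2$, while generators of $L(\gJ)$ have degree $|P|$). What your argument actually requires is the weaker equality of kernels $L(\gJ)\cap B(\gJ)=L(\gJ)\cap I_{\partial\Delta(\gJ)}^A$; that one is true, but to use it you must first determine $\partial\Delta(\gJ)$ independently of the proposition---and in the paper that determination is itself derived \emph{from} the proposition. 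So your route, even after correcting the identity, would demand a separate combinatorial identification of all boundary facets of $\Delta(\gJ)$ (not just those coming from bistrict pairs; the generators of $L(\gJ^c)$ also contribute), which is more work than the paper's direct computation.
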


\begin{proof} The complex $\Delta(\gJ)$
has facets which are the complements of the 
$\Gamma \phi$, where $\phi \in \gJ$ (this is a general fact of Alexander 
duality). 
By Proposition \ref{prop:SqLinCan}  the resolution of $(\can)^*$ is the enriched cochain complex
\[ \bigoplus_{\phi \in \gJ} S(-\Gamma \phi) 
\vmto{d} \bigoplus_{\overset{|T| = |P| + 1}{T \supseteq \,
\text{some} \, \Gamma \phi, \phi \in \gJ}} S(-T) \longleftarrow  \cdots . \]

We describe the map $d$. Let $e_T$ be the generator of $S(-T)$.
Note that given such a $T$ to the right above, there is a unique
$p \in P$ with two distinct elements $(p,i), (p,j)$ in $T$ with 
$i < j$.

\medskip
1.  If the monomial $m_T$ is in the ideal $B(\gJ)$, 
then $T$ contains $\Gamma \phi$ 
for a unique $\Gamma \phi$ by the above comment. 
Suppose $(p,j)$ is in $T$ but not in $\Gamma \phi$.
Then $e_T \mapsto \pm x_{p,j} e_{\Gamma \phi}$. 

\medskip
2. Suppose the monomial $m_T$ is not in the ideal $B(\gJ)$.
Then $T\backslash \{(p,j) \}$ is a graph  $\Gamma \phi$, while
$T \backslash \{(p,i) \}$ is a graph $\Gamma \psi$: note that both $\phi$ and $\psi$ lie in $\gJ$. The map $d$ sends
\[ e_T \mapsto \pm (x_{p,j} e_{\Gamma\phi} - x_{p,i} e_{\Gamma \psi}). \]
Consider the map 
\[ \oplus_{\phi \in \gJ} S(- \Gamma \phi) \mto{p} S \]
sending the generator $e_{\Gamma \phi}$ to $m_{\Gamma \phi}$. We see that if 
$T$ is as in case 1. above, then  $d(e_T)$ maps by $p$ to the ideal 
$B(\gJ)$. 
If $T$ is as in case 2. then $d(e_T)$ maps to zero by $p$.
Hence we get a complex
\[ S/B(\gJ) \vmto{\pb} \bigoplus_{\phi \in \gJ} S(-\Gamma \phi) 
\vmto{d} \bigoplus_{\overset{|T| = |P| + 1}{T \supseteq \,
\text{some}\, \Gamma \phi}, \, \phi \in \gJ} \vpil  \cdots . \]
The image of $\pb$ is exactly $L(\gJ|\St)$. We will show that
the complex above is exact in the middle displayed term.
This will prove our statement.
\medskip

So suppose $r = \sum_\phi r_\phi e_{\Gamma \phi}$ maps to zero in $S/B(\gJ)$.
We may assume the sum is homogeneous for the multigrading, and
even squarefree and so the $r_\phi$ are monomials. 
Let  $R \sus P \times [n]$ be the subset corresponding
to its multidegree.

%\begin{claim} Suppose $m_A$ is in $Q$. If $a_\phi m_{\Gamma \phi}$ 
%is in the ideal
%$Q$, then for some $x_{p,i}$ in $a_\phi$, $x_{p,i}m_{\Gamma \phi}$ is in $Q$. 
%\end{claim}

%\begin{proof}
%Suppose $x_{p,i} x_{q,j}$ with $p < q$ and $i > j$ divides 
%If $i > \phi(q)$ then $x_{p,i}m_{\Gamma \phi}$ is in $Q$. If $ i \leq \phi(q)$
% In 1. and 2. below we assume that $m_A$ is in the ideal $Q$.

\medskip
1. Suppose some $x_{p,i}$ in $r_\phi$ is such that $x_{p,i} m_{\Gamma \phi}$
is in $B(\gJ)$. Then if $T$ is the degree of $x_{p,i} m_{\Gamma \phi}$ we can 
subtract a suitable multiple of the image of $d(e_T)$ from $r$. 
This will remove the term $r_\phi e_
{\Gamma \phi}$ and add no other terms.

\medskip
%Suppose $a_\phi m_{\Gamma \phi}$ is in $R$ 
2. Suppose 1. does not apply. 
%Let $x_{p,i} x_{q,j}$ in $a_\phi$ with $p < q$ and $i > j$. 
%As 1. does not apply we cannot have $i > j \geq \phi(q)$ and so $j < \phi(q)$.
Suppose there is some $(p,i)$ in $r_\phi$ with $i < \phi(p)$.
Let $\phi^\prime$ be the map $\phi$ but with
the change that $p$ is sent to $i$. This is an isotone map since if 
$q < p$ we cannot have $\phi(q) > i$ as 1. does not apply.
Hence $\phi(q) \leq i$. 
We see that $\phi^\prime < \phi$ and hence $\phi' \in \gJ$. Let $T \sus P \times [n]$ give the
degree of $x_{p,\phi(p)}m_{\Gamma \phi^\prime}$, which is also the degree 
of $x_{p,i} m_{\Gamma \phi}$. We see that 
\[  \frac{r_\phi}{x_{p,i}}e_T \overset{d}\mapsto 
 \pm \frac{r_\phi}{x_{p,i}} x_{p,\phi(p)} e_{\Gamma \phi^\prime}
\mp  r_\phi e_{\Gamma \phi}.\]
Hence we may add a multiple of this to $r$ and get terms involving
smaller $\phi$'s.

3. Suppose every $x_{p,i}$ in
every $r_\phi$ is such that $i > \phi(p)$. (We cannot have equality since $r$
has squarefree degree).  But then it is easily seen that $\phi$ is 
uniquely determined by the squarefree degree of $r$, and so there can
only be one term in $r$. But then $r$ cannot map to zero in $L(\gJ|\St)$.

The upshot is that as long as $r$ is nonzero, we can always
adjust with a boundary to get that the terms
involve smaller and smaller $\phi$'s. In the end we will be left with
$r = 0$. Thus we have shown that $(\can)^*$ is $L(\gJ|\St)$ and
hence $\can$ is the Alexander dual $L(\gJ|\St)^*$. 
\end{proof}

As noted after Theorem \ref{SRcanonical}, a consequence of the above is that
both the canonical module $\can$ and its Alexander dual identify
as ideals in Stanley-Reisner rings in a multigraded way.

\begin{corollary} \label{cor:ResCoLpBoth}
The Alexander dual module $(\can)^*$ is the image of the natural map 
\[ L(\gJ) \pil \kr[x_{P \times [n]}]/ B(\gJ) \]
%= \kr[x_{P \times [n]}]/ (L(\gJ^c) + B(P,n)) \]
and $\can$ is the image of the natural map 
\[ B(\gJ)^A \pil   \kr[x_{P \times [n]}]/ L(\gJ)^A =  \kr[\Delta(\gJ)] \]
% = \kr[x_{P \times [n]}]/ L(\gJ)^A. \]
Hence both modules identify as ideals in the respective rings (with
embeddings respecting
the multigrading).
\end{corollary}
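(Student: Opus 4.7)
The first assertion is essentially a tautology once Proposition \ref{pro:ResCoLpCanD} is in hand: by definition $L(\gJ|\St)$ is the image of the natural map $L(\gJ) \to \kr[x_{P \times [n]}]/B(\gJ)$, and Proposition \ref{pro:ResCoLpCanD} says exactly that $\can = L(\gJ|\St)^*$, which by involutivity of squarefree Alexander duality is the same as $(\can)^* = L(\gJ|\St)$.

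For the second assertion, my plan is to dualize the first. I view $L(\gJ|\St)$ as an ideal of $S/B(\gJ)$ and write down the short exact sequence
\[0 \to L(\gJ|\St) \to S/B(\gJ) \to S/(L(\gJ)+B(\gJ)) \to 0.\]
Applying the contravariant exact Alexander duality functor on squarefree modules, together with the standard identifications $(S/I)^* = I^A$ for any squarefree monomial ideal $I$ and $(I+J)^A = I^A \cap J^A$, this transforms into
\[0 \to L(\gJ)^A \cap B(\gJ)^A \to B(\gJ)^A \to \can \to 0.\]
The leftmost term is precisely the kernel of the composition $B(\gJ)^A \hookrightarrow S \twoheadrightarrow S/L(\gJ)^A = \kr[\Delta(\gJ)]$, so $\can$ is naturally the image of that composition, which is the second assertion.

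There is no substantial obstacle here; the argument is formal manipulation with Alexander duality once Proposition \ref{pro:ResCoLpCanD} is established. The two ideal-level facts used, namely $(S/I)^* = I^A$ and $(I+J)^A = I^A \cap J^A$, are classical: the first follows by dualizing $0 \to I \to S \to S/I \to 0$ and noting that $S$ is self-dual as a squarefree module, while the second reflects the passage between minimal nonfaces of sums and facets of intersections at the level of Stanley--Reisner complexes. The bookkeeping is all that is needed.
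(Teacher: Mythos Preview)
Your proof is correct and follows essentially the same approach as the paper. The paper's own proof is terser---it simply says to apply the exact Alexander duality functor to the first map $L(\gJ) \to \kr[x_{P \times [n]}]/B(\gJ)$ and observe that the image of the dual map is the dual of the image---whereas you spell this out by dualizing the associated short exact sequence and explicitly identifying the kernel as $L(\gJ)^A \cap B(\gJ)^A$; the content is the same.
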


\begin{proof}
The first part is simply the above proposition.
%The Alexander dual $\can^*$
%is the image of the map
%\begin{equation} L(\gJ) \pil \kr[x_{P\times[n]}]/B(\gJ).
%\end{equation}
Since Alexander duality is an exact functor on modules,
applying this functor to
the first map,
the canonical module $\can$ is the image of the second map above.
%\begin{equation*} 
%B(\gJ)^A \pil \kr[x_{[n] \times P}]/L(\gJ)^A.
%\end{equation*}
\end{proof}

\begin{remark}
By applying Theorem \ref{SRcanonical} we immediately get that $\Delta(\gJ)$ is either a homology ball or a homology sphere. %When it is a homology ball, the Stanley-Reisner ideal of its boundary is $L(\gJ)^A + B(\gJ)^A$.

We will find out by combinatorial means that $\Delta(\gJ)$ is actually even a 
piecewise linear (PL) ball or sphere, see Section \ref{SphereSection} below.
\end{remark}

\subsection{The resolution}

By Proposition \ref{prop:SqLinRes},
$\gL(\can)$ is the resolution of the co-letterplace ideal $L(\gJ)$. Since
$L(\gJ|\St)$ is the Alexander dual of $\can$,
the linear complex 
$\gL(L(\gJ|\St))$ is the dual of the resolution of the co-letterplace ideal 
$L(\gJ)$. This complex has terms
\[ \gL^i(L(\gJ|\St)) = \bigoplus_{\overset {m_A \, \text{nonzero in} \, 
L(\gJ|\St)}
  {i = |A|}} S(-A^c). \]
Denote by $(m_A)^\circ$ the generator for $S(-A^c)$. The differential 
in $\gL(L(\gJ|\St))$ is given by 
\[ (m_A)^\circ \mapsto \sum_{a \in A^c} (-1)^{\alpha(a,A)} (m_{A \cup \{a\}})^\circ 
\te x_a. \]

The dual of this complex, $\gL(\can)$, is the resolution of the 
co-letterplace ideal $L(\gJ)$. 
The terms here are
\begin{equation} \label{ColpTheTerms}
 \gL_i(\can) =
 \bigoplus_{\overset{ m_A \, \text{nonzero in} \, L(\gJ|\St)}
{i = |A|}} S(-A).
\end{equation}
(So $\gL_{|P|}(\can)$ is the start of the resolution, mapping to the
minimal generators of the letterplace ideal $L(\gJ)$.)
Let $m_A$ be nonzero in $L(\gJ|\St)$ where $A \sus P \times [n]$. 
Then, since $m_A$ is divisible by some
$m_{\Gamma \phi}$, the projection map $A \mto{p_1} P$ is surjective. Let
$A_2 \sus A$ be the subset which is the union of the fibers 
of $p \in P$ with cardinality at least two. Write $e_A$ for the generator of 
$S(-A)$. We then get the following very simple explicit description 
of the resolution of $L(\gJ)$.

\begin{theorem} \label{thm:resCoLPA} The resolution of the co-letterplace ideal 
$L(\gJ)$ is $\gL(\can)$ with terms
\begin{equation*}
 \gL_i(\can) =
 \bigoplus_{\overset{ m_A \, \text{nonzero in} \, L(\gJ|\St)}
{i = |A|}} S(-A),
\end{equation*}
and, letting $e_A$ be the generator of $S(-A)$, differential 
\[ e_A \mapsto \sum_{a \in A_2} (-1)^{\alpha(a,A)} e_{A\backslash {a}} \te x_a.
\]
\end{theorem}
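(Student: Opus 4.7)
The plan is to read off the theorem from Proposition \ref{prop:SqLinRes} together with Proposition \ref{pro:ResCoLpCanD}: the former says that the resolution of $L(\gJ)$ is $\gL(\can)$, and the latter identifies $\can$ with the Alexander dual of $L(\gJ|\St)$. Everything else amounts to unpacking these two statements.

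First I would spell out the terms. By the definition of $\gL$ we have $\gL_i(\can) = \bigoplus_{|R^c|=i} (\can_R)^\circ \te_\kr S$, and Alexander duality gives that $\can_R$ is one-dimensional exactly when $m_{R^c}$ is nonzero in $L(\gJ|\St)$. Setting $A = R^c$ recovers equation (\ref{ColpTheTerms}), with distinguished generator $e_A$ of $S(-A)$ dual to $m_A$.

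Next, the differential in $\gL(\can)$ sends $e_A \mapsto \sum_{j \in A} \pm (x_j \cdot e_A) \te x_j$, where $x_j$ acts on $\can$ by multiplication; under the identification of $\can$ with $L(\gJ|\St)^*$, this is the $\kr$-linear dual of the multiplication map $L(\gJ|\St)_{A \setminus \{j\}} \to L(\gJ|\St)_A$. Since both sides are at most one-dimensional, this dual sends $e_A$ to $e_{A \setminus \{j\}}$ when $m_{A \setminus \{j\}}$ is nonzero in $L(\gJ|\St)$, and to zero otherwise. The theorem thus reduces to showing that $m_{A \setminus \{j\}}$ is nonzero in $L(\gJ|\St)$ if and only if $j \in A_2$, with the signs matching those inherited from the definition of $\gL$.

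The direction $j \notin A_2 \Rightarrow m_{A \setminus \{j\}} \notin L(\gJ)$ is immediate: removing the unique fibre element over $p$ leaves the projection to $P$ non-surjective, so $A \setminus \{j\}$ contains no graph $\Gamma \phi$. For the converse, when $j \in A_2$, the non-membership $m_{A\setminus\{j\}} \notin B(\gJ)$ is formal, since $B(\gJ)$ is a monomial ideal and $m_A \notin B(\gJ)$. Membership $m_{A\setminus\{j\}} \in L(\gJ)$ would be proved by exhibiting $\phi' \in \gJ$ with $\Gamma \phi' \subseteq A \setminus \{j\}$: starting from any $\phi \in \gJ$ with $\Gamma \phi \subseteq A$, if $j = (p, i)$ is not in $\Gamma \phi$ then $\phi' = \phi$ works; otherwise I would swap $\phi(p) = i$ with the other $P$-fibre element $(p, i')$ of $A$ (whose existence is the $A_2$-hypothesis), and isotonicity of $\phi'$ follows from the staircase property of $A$, which in turn comes from $m_A \notin B(P, n)$. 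Finally $\phi' \in \gJ$ is argued by contradiction: if $\phi' \in \gJ^c$ then $\Gamma \phi' \subseteq A$ would force $m_A \in L(\gJ^c) \subseteq B(\gJ)$, contradicting $m_A \notin B(\gJ)$. This last step, which ties together all three defining ingredients of $L(\gJ|\St)$, is the main subtlety.
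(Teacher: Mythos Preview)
Your approach is correct and follows essentially the same route as the paper: both invoke Proposition~\ref{prop:SqLinRes} and Proposition~\ref{pro:ResCoLpCanD} to identify the resolution as $\gL(\can)$ with $\can = L(\gJ|\St)^*$, and then unpack the terms and differential. The paper writes down $\gL(L(\gJ|\St))$ explicitly and then computes its $S$-dual via a pairing argument, whereas you compute $\gL(\can)$ directly by interpreting multiplication in $\can$ through Alexander duality. Your combinatorial verification that $m_{A\setminus\{j\}}$ is nonzero in $L(\gJ|\St)$ exactly when $j \in A_2$ is in fact more explicit than what the paper writes; the paper's pairing argument leaves this step implicit.

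One point deserves care: your phrase ``with the signs matching those inherited from the definition of~$\gL$'' glosses over a genuine bookkeeping issue. Applying the definition of $\gL$ directly to $\can$, the summand $S(-A)$ arises from $\can_R$ with $R = A^c$, and the built-in sign is $(-1)^{\alpha(j,R)} = (-1)^{\alpha(j,A^c)}$, not the $(-1)^{\alpha(j,A)}$ asserted in the theorem. These two sign systems differ only by a diagonal change of basis (since $\alpha(j,A) + \alpha(j,A^c)$ depends on $j$ alone), so the complex you obtain is isomorphic to the one stated; but to get the theorem's signs on the nose you need either that base change or the paper's duality-pairing computation, which produces $\alpha(a,A)$ directly.
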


\begin{remark}
The criterion for $m_A$ to be nonzero in $L(\gJ|\St)$ is
straightforward to check: 
i) For each $p \in P$ let $\phi(p) = \max \{ j \, | \, 
(p,j) \in A \}$. Then  $\phi$ must be an isotone map in $\gJ$.
ii) $A$ must not contain any pairs $(p,i)$ and $(q,j)$ with $p < q$ and
$i > j$. 
\end{remark}

\begin{proof}[Proof of Theorem \ref{thm:resCoLPA}]

Let $F$ and $G$ be finitely generated free modules over a (commutative)
ring $R$, and let $F^* = \Hom(F,R)$ and similarly define $G^*$. 
Two morphisms 
\[ F \mto{\nu} G, \quad G^* \mto{\mu} F^* \]
are dual to each other provided for each $f \in F$ and $g \in G^*$
the natural pairings
\[ (\nu(f), g) = (f, \mu(g)) \]
are equal. Note that it is enough to prove this for generators $f$ and $g$
of the modules.

Consider the dual differentials
\[ \gL^{i-1}(L(\gJ|\St)) \mto{\mu} \gL^i(L(\gJ|\St)), \quad
\gL_{i}(\can) \mto{\nu} \gL_{i-1}(\can).\]
Let $e_{A^\prime}$ be the generator for the term $S(-A^\prime)$ in 
$\gL_{i}(\can)$ and $(m_A)^\circ$ the generator for the term
$S(-A^c)$ in $\gL^{i-1}(L(\gJ|\St))$ (so $m_A$ and $m_{A^\prime}$ are
nonzero in $L(\gJ|\St)$). The map $\mu$ sends
\begin{align*}
(m_A)^\circ \mapsto (-1)^{\alpha(a,A)} x_a (m_{A \cup \{a\}})^\circ
+ & \text{ terms indexed by } A \cup \{ b \} & \\
& \text{ where } b \in A^c \backslash
\{ a \}. & 
\end{align*}
Let $\nu^\prime$ be the map sending
\[ e_{A^\prime} \mapsto \sum_{a \in (A^\prime)_2} (-1)^{\alpha(a,A^\prime)} e_{A^\prime\backslash {a}} 
\te x_a.\]
We are claiming that $\nu^\prime$ is the dual map $\nu$. 
%By the pairing $((m_A)^\circ, e_A)) = 1$, and 
%$((m_A)^\circ, e_B) = 0$ if $A$ and $B$ are not equal.

i) Suppose $A^\prime = A \cup \{ a \}$ for some $a$ (note that $a$
is then in $(A^\prime)_2$). The map $\nu^\prime$ then sends
\begin{align*} e_{A \cup \{ a \}} \mapsto (-1)^{\alpha(a,A \cup \{ a \})}
x_a e_A + & \text{ terms indexed by } (A \backslash \{b\}) \cup \{ a \} & \\ 
& \text{ where } b \in A.&  
\end{align*}
So  
\begin{align*} 
((m_A)^\circ, \nu^\prime(e_{A\cup \{ a \}})) & = (-1)^{\alpha(a,A \cup \{ a \})}
x_a & \\
(\mu(( m_A)^\circ), e_{A\cup \{ a \}}) & =  (-1)^{\alpha(a,A)}
x_a.&
\end{align*}
Since $\alpha(a,A) = \alpha(a, A \cup \{ a \})$ for $a \in A^c$ these are
equal. 

ii) If $A^\prime$ does not contain $A$ then the above pairings
are easily seen to be both zero. Hence the map $\nu^\prime$ is the
dual map $\nu$.
\end{proof}

\begin{remark} As mentioned in the introduction, another approach to get
explicitly a linear resolution of a monomial ideal $I$ in a
polynomial ring $S$
is using a regular decomposition function
\cite{HeTa}. In \cite{DoFa} and \cite{Good} one doubles it up
by even constructing cellular resolutions in 
this case. The situation of 
a monomial ideal having a regular
decomposition function is likely to be more general than what can be achieved as
regular quotients by variable differences of letterplace ideals. 
In general, however, the Alexander dual of the canonical module of the Alexander
dual $S/I^A$ would not be embedded as a multigraded ideal in a ring.

N.~Horwitz in \cite{Hor} gives explicit linear resolutions
of edge ideals under certain conditions. It is not clear to us
if his construction can be related to our setting.
%the quotient ring $S/I$. An interesting question is
%when this latter happens for ideals $I$ 
%with a r.d.f.,
%or more generally when an ideal $I$ with r.d.f. 
%is a regular quotient by variable
%differences of an ideal $J$ in a polynomial ring $T$ where the canonical 
%module of the Alexander dual ideal of $J$ is an ideal in $T/J$. 
\end{remark}

\subsection{Resolutions of strongly stable ideals}
\label{subsec:ResCoLpStSt}

The $i$'th homological term in $\gL(\can)$ is
\begin{equation} \label{eq:resCoLPA}
 \gL_i(\can) =
 \bigoplus_{\overset{ m_A \, \text{nonzero in} \, L(\gJ|\St)}
{i = |A|}} S(-A).
\end{equation}
By Corollary \ref{StairFacetPosetIdeal} 
the squarefree monomials $m_A$ in $L(\gJ|\St)$ are indexed by
subsets $A$ of $\cup_{\phi \in \gJ} [\Gamma \phi, T_* \phi]$. 

Hence we may write
\begin{equation*}
 \gL_{i + |P|}(\can) =
 \bigoplus_{\overset{ \overset{\phi \in \gJ}{ A \in [\Gamma \phi, T_* \phi]}}
{|A| = i + |P|}} Se_A.
\end{equation*}
%\begin{equation*}
% \gL_{i + |P|}(\can) =
% \bigoplus_{\scriptstyle{\begin{matrix}{\phi \in \gJ} \\
%{ A \in [\Gamma \phi, T_* \phi]}\\
%{|A| = |\Gamma \phi| + i} \end{matrix}}} Se_A.
%\end{equation*}
Such an $A$ may be represented by a pair $(\Gamma \phi, D)$ where
\[ A = \Gamma \phi \cup D, \quad D \sus T_* \phi \backslash \Gamma \phi, 
\quad |D| = i, \]
so 
%\comment{note that $T_* \phi \backslash \Gamma \phi$ is what we used to call $\Lambda \phi$ --Alessio}
\begin{equation} \label{eq:resCoLPphiD}
 \gL_{i + |P|}(\can) =
 \bigoplus_{\overset{ \overset{\phi \in \gJ}{ D \sus T_* \phi \backslash
\Gamma \phi}}
{|D| = i}} Se_{(\Gamma \phi, D)}.
\end{equation}

The differential sends 
\[ e_A \mapsto \sum_{a \in A_2} (-1)^{\alpha(a,A)} e_{A \backslash \{ a\}}
\te x_a. \]
Note however that if $A \in [\Gamma \phi, T_* \phi]$
and $a  = (p, \phi(p))$ then $A \backslash \{a \}$
will be in a disjoint interval $[\Gamma \psi, T_* \psi]$. 
Thus using the notation (\ref{eq:resCoLPphiD}) when describing
the differential, though
more explicit, is somewhat more awkward than using the notation
(\ref{eq:resCoLPA}). However we shall now see the typical case where
this form (\ref{eq:resCoLPphiD}) is used.

Let $P = [d]$. There is a one to one correspondence between $\Hom([d],[n])$
and monomials in $\kr[x_1, \ldots, x_n]$ of degree $d$ given by
\[ \phi \mapsto m_{\phi} = \prod_{i = 1}^d x_{\phi(i)}. \]
Via this correspondence poset ideals $\gJ$ in $\Hom([d], [n])$ correspond
one to one to strongly stable ideals $I$ in $\ovS = \kr[x_1, \ldots, x_n]$ 
generated in degree $d$. As explained in \cite[Section 5]{FGH}, the ideal $I$
is a regular
quotient of the co-letterplace ideal $L(\gJ)$ in $\kr[x_{[d] \times [n]}]$
by sending $x_{i,j}$ to $x_j$. Thus the strongly stable ideal $I$ will have
a resolution $\ovgL(\can)$ with terms
\[ \ovgL_i(\can) = \oplus \ovS.e_{(\Gamma \phi, D)}.\]

%But $\Gamma \phi$ may be identified with $m_\phi$ in  

\begin{lemma} Let $P = [d]$ and  
consider $(\Gamma \phi, D)$ with $D \sus T_* \phi \backslash \Gamma \phi$.

1. $D$ is uniquely determined by its projection $p_2(D)$ onto $[n]$. 

2. Any subset of $[1,\phi(d)-1]$  occurs as $p_2(D)$ for some $D$.

\end{lemma}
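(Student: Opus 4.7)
The plan is to make both statements immediate by unpacking $T_*\phi$ when $P = [d]$ is a chain. Since the poset relation is a total order, $\phi_-(p) = \phi(p-1)$ for $p \geq 2$, while $\phi_-(1) = 1$ by convention. Writing $\phi(0) := 1$ for uniformity, the definition of $T_*\phi$ collapses to
\[ T_*\phi = \{ (p,i) \mid \phi(p-1) \leq i \leq \phi(p) \}, \]
so that
\[ T_*\phi \setminus \Gamma\phi = \{ (p,i) \mid \phi(p-1) \leq i < \phi(p) \}. \]

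The next step is a one-line observation: as $p$ ranges over $[d]$, the half-open intervals $[\phi(p-1), \phi(p))$ are pairwise disjoint (some possibly empty) and their union is exactly $[1, \phi(d))$. This is just the telescoping of endpoints of an isotone map of chains starting at $\phi(0) = 1$.

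From this partition both claims drop out. For part 1, if $(p,i)$ and $(p',i)$ both lie in $T_*\phi \setminus \Gamma\phi$ and share the second coordinate $i$, then the two intervals $[\phi(p-1), \phi(p))$ and $[\phi(p'-1), \phi(p'))$ both contain $i$, so disjointness forces $p = p'$. Hence $D$ is reconstructed from $p_2(D)$ as the set of pairs $(p(i), i)$, where $p(i)$ is the unique index with $i \in [\phi(p(i)-1), \phi(p(i)))$. For part 2, given any subset $S \subseteq [1, \phi(d)-1]$, I set $D := \{(p(i), i) \mid i \in S\}$; by the displayed formula $D \subseteq T_*\phi \setminus \Gamma\phi$ and $p_2(D) = S$, as required.

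There is no real obstacle: once $\phi_-(p)$ is replaced by $\phi(p-1)$ and the resulting partition of $[1, \phi(d))$ is recognized, both claims are bookkeeping. The only minor points to watch are the convention $\phi(0) = 1$ and the possible emptiness of some intervals $[\phi(p-1), \phi(p))$, neither of which affects injectivity of $p_2$ nor its surjectivity onto $[1, \phi(d)-1]$.
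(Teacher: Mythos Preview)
Your proof is correct and follows the same approach as the paper. The paper's own argument is a terse two-sentence version of exactly what you wrote: it observes that $(p,j)\in D$ forces $\phi(p-1)\leq j<\phi(p)$ (with the convention $\phi(0)=1$), and that for each $j<\phi(d)$ there is a unique such $p$; your expansion into the partition of $[1,\phi(d))$ by the half-open intervals $[\phi(p-1),\phi(p))$ simply makes this explicit.
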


\begin{proof}
For $(i,j)$ in $D$ we have $\phi(i-1) \leq j < \phi(i)$ (we
define $\phi(0)$ to be $1$). When we know $\phi$, there is for each
$j < \phi(d)$ a unique position $i$ which fulfils this.
\end{proof}

  Hence the pair $(\Gamma \phi, D)$ with $|D| = i$ may be represented
by a pair $(m_\phi; j_1,\ldots, j_i)$ where $1 \leq j_1 < \cdots < j_i <
\phi(d)$. This is precisely the form of the multigraded Betti numbers
in the Eliahou-Kervaire (EK) resolution, \cite{ElKe} or \cite{PeSt}.

Although the $i$'th homological terms 
$\oplus \overline{S}.(m_\phi;j_1, \ldots, j_i)$ are the same in the EK resolution and
the resolution $\ovgL(\can)$ we derive from Theorem \ref{thm:resCoLPA} in the
case $P = [d]$, the differentials are not the same. Let us describe them in 
detail.

%The differential in the EK resolution is $d_{EK} = \vardel - \mu$
Fix a generator $(m_\phi; j_1, \ldots, j_i)$
in the $i$'th homological
term of the resolution. In order to define the differentials
we need to define modified isotone maps $\phi^1_t$ and $\phi^2_t$. 
For each $1 \leq t \leq i$ let $p_t \in [d]$ be such that
$\phi(p_t-1) \leq j_t < \phi(p_t)$ (in other words $(p_t,j_t) \in D$ is
the inverse image of $j_t$ by the projection map $p_2$)
and define isotone maps by
\[ \phi^1_t(r) = \begin{cases} \phi(r) & r < p_t \\
                             j_t & r = p_t \\
                            \phi(r) & r > p_t
                 \end{cases},
\quad
\phi^2_t(r) = \begin{cases} \phi(r) & r < p_t \\
                             j_t & r = p_t \\
                            \phi(r-1) & r > p_t
                 \end{cases}
\]
%\[ \phi^1_t(p_t) = j_t, \quad \phi_t(r) = \phi(r) \text{ for } r \neq p_t. \]
%The isotone map $\phi^2_t$ is defined by 
%\[\phi^2_{t}(r) = \left\{ \begin{array}{lll} \phi(r) & & \text{if } \phi(r) \le%q j_t \\ j_t & & \text{if } \phi(r-1) \leq j_t < \phi(r)\\ \phi(r-1) & & \text{%if } \phi(r-1) > j_t \end{array} \right.\]
Let $\vardel$ map the generator $(m_\phi; j_1, \ldots, j_i)$ to
\begin{equation} \label{vardel_EK} \sum_{t = 1}^i (-1)^t x_{j_t} (m_\phi; \ldots, \hat{j_t}, \ldots), \end{equation}
and let $\mu$ map the generator to 
\begin{equation} \label{mu_EK} \sum_{t = 1}^i (-1)^{t} x_{\phi(d)} (m_{\phi^2_t}; \ldots, \hat{j_t}, \ldots ). \end{equation}
(Note that \eqref{mu_EK} can contain ill-defined summands: these are considered as zeros.)
Then the differential in the EK resolution is $\vardel - \mu$.

On the other hand, let $\vardel^{\prime}$ map $(m_\phi; j_1, \ldots, j_i)$
to 
\begin{equation} \label{vardel_coLP} \sum_{t = 1}^i (-1)^{p_t+t} x_{j_t} (m_\phi; \ldots, \hat{j_t}, \ldots), \end{equation}
and let $\mu^\prime$ map this generator to
\begin{equation} \label{mu_coLP} \sum_{(p_t, j_t) \in D'} (-1)^{p_t+t-1} 
x_{\phi(p_t)} (m_{\phi^1_t}; \ldots, \hat{j_t}, \ldots ), \end{equation}
where $D'$ is the subset of $D$ consisting of the pairs 
$(p_t,j_t)$ such that $j_t < \phi(p_t) \leq j_{t+1}$, i.e. $j_t$ is maximal
among the $j_*$'s which are strictly less than $\phi(p_t)$.

Then the differential in the resolution derived from the approach
using the co-letterplace ideal, Theorem \ref{thm:resCoLPA}, 
is $\vardel^\prime - \mu^\prime$. 
This is also (up to a sign) the differential in the cellular resolutions
of strongly stable ideals generated in a single degree, given
in \cite{Sin} and \cite{NaRe}. 

Since this differential naturally occurs both in our situation and when constructing a cellular resolution by a polytopal
complex, it is suggested that this differential is quite natural.

%If $j < \phi(d)$, let $i$ be such that $ \phi(i-1) \leq j < \phi(i)$.
%Define a new map $\phi^\prime$ by $\phi^\prime(i) = j$ and $\phi^\prime(k) 
%= \phi(k)$ for $k \neq i$. 

%In the EK resolution the generator $(m_\phi; j_1,\ldots, j_t)$ is mapped to
%\[ \sum_{q = 1}^i (-1)^q x_{j_q} (m_\phi; j_1, \cdots, \hat{j}_q, \cdots,
%j_t) - \sum_{q=1}^i (-1)^q x_{j_{q^\prime}} (m_{\phi^\prime}, j_1, 
%\cdots, \hat{j}_q, \cdots, j_t). \]
%This map corresponds precisely to the map sending $(\Gamma \phi, D)$ 
%to 
%\[ \sum_{\overset{(p,i) \in D}{ i <  \phi)(p)} }
%(-1)^{\alpha}(\Gamma \phi, D \backslash \{ (p,i) \}). \]

\section{Simplicial spheres generalizing Bier spheres} \label{SphereSection}

In this section we show that $\Delta(\gJ;n,P)$ is a simplicial ball, 
more precisely a piecewise linear (PL) ball (save for some very special
cases of $\gJ$ and $P$ when it is a sphere).
The boundary of $\Delta(\gJ;n,P)$ is then a PL sphere. This gives
a very large class of simplicial spheres associated with 
%This section can be considered as an appendix to Subsection 
%\ref{subsec:DualCanonical}.
%We derive the existence of a large
%class of simplicial spheres, each associated
a poset $P$, a natural number $n$, and a poset ideal $\gJ$ of
$\Hom(P,[n])$. A special case of our construction here, though
still a large class, are Bier spheres, originally defined
by T.~Bier in \cite{Bier} and studied by A.~Bj\"orner et al. in \cite{BjZi}.
Bier spheres occur when $P$ is an antichain and $n = 2$. A generalization
of Bier spheres is studied by S.~Murai in \cite{MuSpheres}, see the last part 
of Subsection \ref{Subsec:BierRes}. For the definition
of piecewise linear simplicial complexes, we refer to 
\cite{Bj}.

\subsection{The simplicial ball $\Delta(\gJ)$ and its boundary}

\begin{theorem} \label{thm:BierBall}
Let $\gJ$ be a non-empty poset ideal in $\Hom(P,[n])$.
The simplicial complex $\Delta(\gJ)$ defined by the Alexander dual
$L(\gJ)^A$ is a PL-ball of codimension $|P|$ in the simplex on 
the vertex set $P \times [n]$, 
save for the following exception:
$P$ is an antichain and $\gJ = \Hom(P,[n])$. In this case
$\Delta(\gJ)$ is a PL-sphere and $L(\gJ)^A = L(n,P)$ is a complete 
intersection of
$|P|$ monomials of degree $n$.
\end{theorem}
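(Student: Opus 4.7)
The plan is to show $\Delta(\gJ)$ is a shellable pseudomanifold and then invoke the classical Danaraj--Klee theorem: a shellable pseudomanifold is PL-homeomorphic to a sphere when every codimension one face meets two facets, and to a ball otherwise. Shellability is already in hand, for $L(\gJ)$ has linear quotients by \cite[Proposition 5.1]{FGH}, which under Alexander duality amounts to a shelling of $\Delta(\gJ)$. A standard Alexander duality argument identifies the facets of $\Delta(\gJ)$ as the complements
\[ F_\phi := (P \times [n]) \setminus \Gamma \phi \qquad (\phi \in \gJ); \]
each has $|P|(n-1)$ vertices, confirming codimension $|P|$ in the simplex on $P \times [n]$.

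The first substantive step is the pseudomanifold property. A codimension one face has the form $G = F_\phi \setminus \{(p,i)\}$ with $(p,i) \notin \Gamma\phi$, so $i \neq \phi(p)$. For $G$ to lie in a second facet $F_\psi$ one needs $\Gamma\psi \sus \Gamma\phi \cup \{(p,i)\}$; since $|\Gamma\psi| = |P|$, this forces $\Gamma\psi$ to arise by deleting a single element from the union. The union has two entries with first coordinate $p$, namely $(p,\phi(p))$ and $(p,i)$, and the graph of an isotone map has a unique entry over each $p$, so the only admissible deletions reproduce $\phi$ itself or the \emph{swap} $\psi$ given by $\psi(p) = i$ and $\psi(r) = \phi(r)$ for $r \neq p$. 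Thus at most two facets meet along $G$; the swap produces a genuine second facet precisely when it is isotone (equivalently $\max_{q<p}\phi(q) \leq i \leq \min_{q>p}\phi(q)$) and lies in $\gJ$.

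Next I would determine when \emph{every} codimension one face lies in two facets. If $P$ has a comparable pair $p < q$, the constant map $\phi \equiv 1$ belongs to any nonempty poset ideal $\gJ$, and the swap of $\phi$ at $p$ with target value $2$ fails isotonicity because $\phi(q) = 1 < 2$. Hence $F_\phi \setminus \{(p,2)\}$ is a boundary face and $\Delta(\gJ)$ is a PL-ball. If $P$ is an antichain but $\gJ \susneq \Hom(P,[n])$, pick $\psi$ minimal in the filter $\gJ^c$ and let $\phi$ agree with $\psi$ off a chosen coordinate $p$ with $\phi(p) = \psi(p) - 1$; then $\phi \in \gJ$ by minimality of $\psi$, the swap of $\phi$ at $p$ back to $\psi(p)$ is automatically isotone (antichain) but equals $\psi \notin \gJ$, again yielding a boundary face. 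Conversely, when $P$ is an antichain and $\gJ = \Hom(P,[n])$ every swap is both isotone and in $\gJ$, so every codimension one face lies in two facets and $\Delta(\gJ)$ is a PL-sphere.

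To close the exceptional case, observe that $\gJ = \Hom(P,[n])$ gives $L(\gJ) = L(P,n)$, whose Alexander dual is $L(n,P)$ by \cite[Proposition 1.2]{FGH}. When $P$ is an antichain the constraint $p_1 \leq \cdots \leq p_n$ defining generators of $L(n,P)$ collapses to $p_1 = \cdots = p_n$, so the generators are the $|P|$ monomials $\prod_{i=1}^{n} x_{i,p}$ for $p \in P$, each using a disjoint set of variables, hence a complete intersection of $|P|$ degree $n$ monomials. The main obstacle I foresee is the bookkeeping of the swap argument establishing the pseudomanifold property; once the at-most-two-facets statement is secured, Danaraj--Klee and the case split above deliver the theorem.
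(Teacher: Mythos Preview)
Your proposal is correct and follows essentially the same route as the paper's proof: both use that $L(\gJ)$ has linear quotients to get shellability of $\Delta(\gJ)$, verify via the swap argument that every codimension one face lies in at most two facets, and then invoke the Danaraj--Klee/Bj\"orner criterion (the paper cites \cite[Theorem 11.4]{Bj}). The case analysis for locating a boundary face is the same (constant map $\phi\equiv 1$ when $P$ is not an antichain; a pair $\phi\in\gJ$, $\psi\notin\gJ$ differing in one coordinate when $\gJ$ is proper), and for the exceptional case you compute directly what the paper obtains by citing \cite[Corollary 2.4]{FGH}.
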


\begin{proof}
By \cite[Theorem 5.1]{FGH} the co-letterplace ideal $L(\gJ)$ has linear
quotients and so its Alexander dual $L(\gJ)^A$ defines a shellable
simplicial complex $\Delta(\gJ)$. 
All generators of $L(\gJ)$ have cardinality $|P|$
and so
$\Delta(\gJ)$ is pure of this codimension
(by Alexander duality, the facets of $\Delta(\gJ)$ are the complements of the minimal generators of 
$L(\gJ)$). We now use the criterion
\cite[Theorem 11.4]{Bj}: a (pure) shellable
simplicial complex such that every codimension one face is in
at most two facets is a PL ball or a PL sphere.

Let $\phi \in \gJ$ be an isotone map. The complement of the graph 
$\Gamma \phi$ is a facet of $\Delta(\gJ)$. 
Let $G$ be a codimension one
face of this facet. Then the complement $G^c$ contains $\Gamma \phi$
and the cardinality of $G^c$ is one more than that of $\Gamma \phi$.
Considering the projection of $G^c$ to $P$, there will be exactly one
fiber of cardinality two, the others of cardinality one. Thus $G^c$
can contain at most two graphs of isotone maps, and $G$ is in at most
two facets. Hence $\Delta(\gJ)$ is a ball or a sphere.

\medskip
We now show that $\Delta(\gJ)$ is a ball when
i) $P$ is not an antichain or ii) $\gJ$ is nonempty and properly included in 
$\Hom(P,[n])$. We show that there is $G$ as above contained in only one
facet.
%We apply the following criterion: 
%By Theorem 5.6.1 in \cite{BrHe}, a simplicial complex
%is Gorenstein* iff $\dim_\kr \tH_{\dim \text{lk} F}(\text{lk} F; \kr) = \kr$
%for every face $F$. 

i) Let $q < q'$ in $P$, and let $\phi$ be the map that sends everything to $1$ 
($\phi$ lies in any nonempty poset ideal). 
Then the set $\{ (p,i) \, | \, p \in P, i \geq 2\}$ is a 
facet $F$ of $\Delta(\gJ)$.
Now consider the face $G$ obtained from $F$ by removing the element $(q, 2)$. 
The link of $G$ consists of $(q, 2)$ only (it cannot contain $(q, 1)$, 
since this would give a non-isotone map sending $q$ to $2$ and $q'$ to $1$).
Hence the link of $G$ is just a point.

ii) If the poset ideal is properly included in $\Hom(P, [n])$, 
there exist two isotone 
maps $\phi$ and $\psi$ that differ by a single entry ($\phi$ sends $p$ to $i$, 
$\psi$ sends 
$p$ to $j$ with $i < j$) and are such that $\phi$ lies in the poset ideal 
but $\psi$ does not. We can consider the facet $F$ which is the complement 
of the graph 
of $\phi$ and then take the face $G$ consisting of $F$ without the vertex 
$(p, j)$. 
The link of $G$ consists of $(p,j)$ only by construction.

The rest of the claim follows from \cite[Corollary 2.4]{FGH}.
\end{proof}

As we saw in Proposition \ref{pro:ResCoLpCanD},
the Alexander dual $(\can)^*$ of the canonical module of 
$\kr[\Delta(\gJ)] = \kr[x_{P\times[n]}]/L(\gJ)^A$
is the image of the natural map
\begin{equation} \label{eq:GorstOmegaD} L(\gJ) \pil \kr[x_{P\times[n]}]/B(\gJ).
\end{equation}
and, by Corollary \ref{cor:ResCoLpBoth},
the canonical module $\can$ is the image of the natural map
\begin{equation} \label{eq:GorstOmega}
B(\gJ)^A \pil \kr[x_{[n] \times P}]/L(\gJ)^A 
\end{equation}
and so identifies as an ideal of its ring.
This is the situation described in Theorem \ref{SRcanonical}.
%Such a situation has a general description in 
%Theorem 5.7.1 in the book \cite{BrHe} by W.~Bruns and J.~Herzog. 
%A slight reformulation of this theorem is the following.
%(Recall that a simplicial complex is Gorenstein* if it is 
%Gorenstein and not a cone. Simplicial spheres are typical 
%examples of Gorenstein* complexes.)

%\begin{theorem} \label{thm:GorstSigma} 
%Let $\Delta$ be a Cohen-Macaulay simplicial complex over
%a field $\kr$, such that $\Delta$ is not Gorenstein*.
%Then there is a $\hele^n$-graded embedding of
%$\omega_{\kr[\Delta]}$ into $\kr[\Delta]$ iff there is a codimension
%one subcomplex $\Sigma$ of $\Delta$ which is Gorenstein* 
%and such that for all $F \in \Delta$
%\[ \tH_{\dim \text{lk} F}(\text{lk} F; \kr) \iso 
%\begin{cases} 0 &  F \in \Sigma \\
%\kr & F \not \in \Sigma
%\end{cases}
%\]
%\end{theorem}

%We see that the last condition determines $\Sigma$ uniquely:
%it says that $\Sigma$ is the boundary of $\Delta(\gJ)$.
%In our context we have an embedding of $\can$ into $\kr[\Delta(\gJ)]$ by (\ref{eq:GorstOmega}),
%$\kr[x_{P\times[n]}]/L(\gJ)^A$,
%so the above result applies and there will be a distinguished
%Gorenstein* subcomplex $\Sigma$ of $\Delta(\gJ)$.

Let us describe the boundary of our ball. The map (\ref{eq:GorstOmegaD}) 
gives an exact sequence
\begin{align*} 
 0  \pil  L(\gJ) \cap B(\gJ) & \pil L(\gJ) \pil \kr[x_{P\times[n]}]/B(\gJ) & \\
& \pil  \kr[x_{P\times[n]}]/(L(P,n) + B(P,n)) \pil 0.& 
\end{align*}
Alexander duality is an exact functor and, hence, dualizing the above
sequence, we get an exact sequence
\begin{align} \label{eq:GorstSigmaSes}
 0  \pil L(P,n)^A \cap B(P,n)^A & \pil B(\gJ)^A \mto{\tau}
\kr[\Delta(\gJ)] & \notag \\
&\pil \kr[x_{[n] \times P}]/(L(\gJ)^A + B(\gJ)^A) \pil 0. &
\end{align}
Here we use the fact that the Alexander dual ideal of a sum of ideals is the intersection
of their Alexander dual ideals, and vice versa. 
We get a squarefree quotient ring
of $\kr[\Delta(\gJ)]$ and so a natural subcomplex of $\Delta(\gJ)$. 
The image of $\tau$ is the canonical module $\omega_{\kr[\Delta]}$.
We then see from Sequence (\ref{eq:LinresCanBound}) that this subcomplex
is the boundary $\Sigma = \partial \Delta$, with Stanley-Reisner ideal
$L(\gJ)^A + B(\gJ)^A$. Note that the above Sequence (\ref{eq:GorstSigmaSes}) 
gives a presentation of the canonical module
\[ 0 \pil L(P,n)^A \cap B(P,n)^A \pil B(\gJ)^A \pil \omega_{\kr[\Delta]} 
\pil 0. \]
This is different from the presentation used in 
(\ref{boundarysequence}):
\[ 0 \pil L(\gJ)^A \pil L(\gJ)^A + B(\gJ)^A \pil \omega_{\kr[\Delta]} 
\pil 0. \]
We summarize in the following:

\begin{theorem} \label{explicitboundary} Let $\gJ$ be a non-empty poset ideal in $\Hom(P,[n])$
and exclude the following case: $P$ is an antichain and $\gJ = \Hom(P,[n])$.
Then the ideal in $\kr[x_{P \times [n]}]$
\[ (L(\gJ) \cap B(\gJ))^A = L(\gJ)^A + B(\gJ)^A = L(\gJ)^A + L(\gJ^c)^A \cap
B(P,n)^A \]
is a Gorenstein ideal of codimension $|P| +1$ which defines the 
boundary $\Sigma(\gJ)$ of the ball $\Delta(\gJ)$. As a consequence, $\Sigma(\gJ)$ is a sphere.
\end{theorem}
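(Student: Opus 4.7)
The plan is to assemble pieces already put in place in Section \ref{sec:ResCoLp}. The two claimed ideal equalities are formal manipulations of Alexander duality, which exchanges sums and intersections of squarefree monomial ideals. Since $B(\gJ) = L(\gJ^c) + B(P,n)$ by definition, dualizing gives $B(\gJ)^A = L(\gJ^c)^A \cap B(P,n)^A$, and applied to $L(\gJ) \cap B(\gJ)$ it yields $(L(\gJ) \cap B(\gJ))^A = L(\gJ)^A + B(\gJ)^A$.

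Next I would identify this ideal as the Stanley-Reisner ideal of the simplicial complex $\partial \Delta(\gJ)$. This is exactly the content of the exact sequence \eqref{eq:GorstSigmaSes} displayed just above the statement. Proposition \ref{pro:ResCoLpCanD} expresses $(\can)^*$ as the image of the map $L(\gJ) \to \kr[x_{P\times[n]}]/B(\gJ)$, and applying the exact Alexander duality functor produces \eqref{eq:GorstSigmaSes}, in which the cokernel of the map $\tau \colon B(\gJ)^A \to \kr[\Delta(\gJ)]$ (whose image is $\can$ by Corollary \ref{cor:ResCoLpBoth}) is the quotient of $\kr[x_{P \times [n]}]$ by $L(\gJ)^A + B(\gJ)^A$. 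Comparing with the canonical exact sequence \eqref{eq:LinresCanBound}, which writes $\kr[\partial \Delta]$ as the cokernel of $\can \hookrightarrow \kr[\Delta]$, and invoking the multigraded compatibility of presentations noted in the discussion before Theorem \ref{explicitboundary}, the cokernel in \eqref{eq:GorstSigmaSes} is forced to be $\kr[\partial \Delta(\gJ)]$. Hence $\Sigma(\gJ) = \partial \Delta(\gJ)$ is defined by the ideal $L(\gJ)^A + B(\gJ)^A$.

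Finally, Theorem \ref{thm:BierBall} guarantees that $\Delta(\gJ)$ is a PL ball under the stated exclusion, so its boundary $\Sigma(\gJ)$ is automatically a PL sphere, and in particular a homology sphere. By Theorem \ref{SRcanonical} (or the classical Reisner-Stanley criterion), the Stanley-Reisner ring of a homology sphere is Gorenstein, so $L(\gJ)^A + B(\gJ)^A$ is a Gorenstein ideal. For the codimension, note that $\Delta(\gJ)$ is pure of codimension $|P|$ inside the simplex on $P \times [n]$ (its facets being the complements of the graphs $\Gamma \phi$ with $\phi \in \gJ$, each of cardinality $|P|$), so $\Sigma(\gJ)$ has codimension $|P|+1$. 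The one point requiring care is the middle step: one must verify that the map $\tau$ of \eqref{eq:GorstSigmaSes} really is the inclusion $\can \hookrightarrow \kr[\Delta(\gJ)]$ from \eqref{eq:LinresCanBound} as multigraded objects, rather than merely abstractly isomorphic to it. This compatibility is precisely what Corollary \ref{cor:ResCoLpBoth} has been set up to provide.
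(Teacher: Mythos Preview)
Your proposal is correct and follows essentially the same route as the paper: the theorem is stated as a summary of the discussion immediately preceding it, and you have accurately reconstructed that discussion---the Alexander-duality identities, the derivation of sequence \eqref{eq:GorstSigmaSes} from \eqref{eq:GorstOmegaD}, the comparison with \eqref{eq:LinresCanBound} via Corollary \ref{cor:ResCoLpBoth}, and the appeal to Theorem \ref{thm:BierBall} for the PL-ball structure. Your explicit invocation of Theorem \ref{SRcanonical} for the Gorenstein conclusion and your codimension count make explicit what the paper leaves implicit, but the argument is the same.
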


%\begin{proof} By Theorem \ref{thm:BierBall} $\Delta(\gJ)$ is a ball and
%so not Gorenstein*.
%Now we use Theorem \ref{thm:GorstSigma} (i.e. Theorem 5.7.1 in \cite{BrHe})
%and let $\Sigma = \Sigma(\gJ)$ be as defined there.
%By the proof of the $(a) \Rightarrow (b)$ implication in Theorem 5.7.1, 
%we have an exact sequence
%\[0 \rightarrow \can \rightarrow \kr[\Delta(\gJ)] \rightarrow \kr[\Sigma] 
%\rightarrow 0.\]
%Hence by the sequence (\ref{eq:GorstSigmaSes})
%the Stanley-Reisner ideal of $\Sigma$ is
%$L(\gJ)^A + B(\gJ)^A$. We also see by \ref{thm:GorstSigma} that the facets $G$
%of $\Sigma$ are those codimension one faces of $\Delta(\gJ)$ whose
%link $\lk G$ consists of exactly one point, and so $G$ is contained in exactly 
%one facet of $\Delta(\gJ)$.
%Hence $\Sigma$ is the boundary of $\Delta(\gJ)$.
%\end{proof}

\begin{corollary}
When $P$ is an antichain and $n = 2$, the $\Sigma$ are
the Bier spheres studied by A.~Bj\"orner et al. in \cite{BjZi},
originally defined by T.~Bier \cite{Bier}.
\end{corollary}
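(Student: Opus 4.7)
The plan is to unpack the definitions and match $\Sigma(\gJ)$ verbatim with the deleted join $\Delta \ast \Delta^\vee$ that defines the Bier sphere. First, since $P = [m]$ is an antichain, every map $\phi\colon P \to [2]$ is isotone, so $\Hom(P,[2])$ is identified with $2^{[m]}$ via $\phi \leftrightarrow F_\phi := \phi^{-1}(2)$. Under this identification, a poset ideal $\gJ \sus \Hom(P,[2])$ is the same as a simplicial complex $\Delta$ on $[m]$, and the hypotheses of Theorem \ref{explicitboundary} translate to $\emptyset \in \Delta$ and $\Delta \neq 2^{[m]}$, which is the standard input for the Bier construction. Renaming $P \times \{1\}$ as $[m]$ and $P \times \{2\}$ as $[m]^\ast$, the graph $\Gamma\phi$ becomes $([m] \setminus F_\phi) \cup F_\phi^\ast$.

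Because $P$ is an antichain, the ``bistrict'' ideal $B(P,2)$ is zero, so $B(\gJ) = L(\gJ^c)$ and Theorem \ref{explicitboundary} says that the Stanley-Reisner ideal of $\Sigma(\gJ)$ is $L(\gJ)^A + L(\gJ^c)^A$. For a candidate face $T = A \cup B^\ast$ with $A,B \sus [m]$, the condition $m_T \notin L(\gJ)^A$ amounts to the existence of some $F_\phi \in \Delta$ with $A \sus F_\phi \sus [m] \setminus B$, which (by downward closure of $\Delta$) is equivalent to $A \in \Delta$ and $A \cap B = \emptyset$. Similarly, $m_T \notin L(\gJ^c)^A$ asks for some $F_\psi \notin \Delta$ with $A \sus F_\psi \sus [m] \setminus B$; since $\Delta$ is closed under taking subsets, the maximal candidate $F_\psi = [m] \setminus B$ is always a valid witness when any exists, so this condition reduces to $[m] \setminus B \notin \Delta$, i.e. $B \in \Delta^\vee$.

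Combining, $T = A \cup B^\ast$ is a face of $\Sigma(\gJ)$ if and only if $A \in \Delta$, $B \in \Delta^\vee$, and $A \cap B = \emptyset$ --- exactly the description of the deleted join $\Delta \ast \Delta^\vee$, which is the Bier sphere of $\Delta$ from \cite{Bier, BjZi}. The only step requiring a moment's care is the downward-closure observation that the witness non-face can always be taken to be $[m] \setminus B$; everything else is pure notational translation of the machinery already built in the previous section.
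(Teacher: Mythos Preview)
Your proof is correct. Both your argument and the paper's identify the Stanley-Reisner ideal of $\Sigma(\gJ)$, which is $L(\gJ)^A + L(\gJ^c)^A$ (since $B(P,2)=0$ for an antichain), with the defining ideal of the Bier sphere. The difference is in how the match is made: the paper quotes the ideal-theoretic description $I_X + I_{X^A} + (x_{p,1}x_{p,2})_{p \in P}$ of the Bier sphere from \cite{HeKa} and then invokes \cite[Subsection~6.4]{FGH} to identify this with $L(\gJ)^A + L(\gJ^c)^A$, whereas you compute the face set of $\Sigma(\gJ)$ directly from the definition of Alexander duality and match it against the deleted-join description from \cite{BjZi}. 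Your route is more self-contained and avoids the two external references; the paper's route is shorter on the page but pushes the verification elsewhere. One cosmetic point: in this paper $[m]$ denotes the chain, so writing ``$P=[m]$ is an antichain'' clashes with the ambient conventions---use $\underline{m}$ or simply say ``the antichain on $m$ elements''.
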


\begin{proof} Let $P$ be the antichain on $d$ elements.
A poset ideal $\gJ$ in $\Hom(P,[2])$ corresponds to a 
simplicial complex $X$ on $d$ elements. Let $X^A$ be the Alexander
dual simplicial complex. We consider the Stanley-Reisner ideal $I_X$
of $X$ to live in $\kr[x_{P \times \{ 1\}}]$ and the Stanley-Reisner
ideal $I_{X^A}$ of $X^A$ to live in $\kr[x_{P \times \{ 2 \}}]$. 
By the start of \cite[Section 3]{HeKa}, the Stanley-Reisner
ideal of the Bier sphere associated with $X$ has ideal in $\kr[x_{P \times [2]}]$
generated by
\[ I_X + I_{X^A} + (x_{p,1}x_{p,2})_{p \in P}. \]
But by the last paragraph of Subsection 6.4 in \cite{FGH} this
is the ideal $L(\gJ)^A + L(\gJ^c)^A$ which coincides with
the ideal in Theorem \ref{explicitboundary} above, since $B(P,n)$ is the zero ideal when
$P$ is an antichain.
\end{proof}

\subsection{Restricted versions}
\label{Subsec:BierRes}

An isotone map $\mu\colon P \pil [n]$ such that for every $\phi$
in the poset ideal $\gJ \sus \Hom(P,[n])$ we have $\phi \leq \mu$
is said to be an {\it upper bound} for $\gJ$. Note that the minimal generators
of $L(\gJ)$ involve no variables $x_{p,i}$ where $i > \mu(p)$.
 Let $(P\times [n];\mu) = \{(p,i) \in P \times [n] \,| \, i \leq \mu(p) \}$ and let $\kr[x_{P \times [n];\mu}]$ be the polynomial ring whose variables
are indexed by this set.

Then all generators of $L(\gJ)$ are in this polynomial ring, and
we denote by $L^{\leq \mu}(\gJ)$ the ideal in $\kr[x_{P \times [n];\mu}]$ 
which they generate. The generators of the Alexander dual $L(\gJ)^A$
will similarly only involve variables indexed by $(P\times [n];\mu)$,
and so these generators generate an ideal $L^{\leq \mu}(\gJ)^A$ in
$\kr[x_{P \times [n];\mu}]$. (Note that, in this situation, taking the Alexander dual
and restricting the ideal to the smaller polynomial ring commute. 
This is not true in general
when the generators of the ideal are not in the smaller polynomial ring.) 
We denote the associated simplicial complex
by $\Delta^{\leq \mu}(\gJ)$. We see that $\Delta(\gJ)$ is the cone
over $\Delta^{\leq \mu}(\gJ)$ over the vertex set 
$P \times [n] \backslash (P\times [n];\mu)$.
Note that there is a unique minimal $\mu$ such that one can do this:
since $\Hom(P,[n])$ is a lattice (in fact a distributive lattice),
one can let $\mu$ be the join of the elements in $\gJ$.
This minimal $\mu$ is explicitly given by $\mu(p) = \max \{ \phi(p) \, |\, \phi
\in \gJ \}$ and could be called the {\it hull} of $\gJ$. 

All statements in this paper continue to hold in the more general
setting with an upper bound $\mu$, with the appropriate modifications:
each ideal $I$ in $\kr[x_{P \times [n]}]$ must be replaced with the 
restricted ideal
$I^{\leq \mu} = I \cap \kr[x_{P \times [n];\mu}]$. For instance 
$\Delta^{\leq \mu}(\gJ)$ is still a ball, and the canonical module
of $\kr[x_{P \times [n];\mu}]/L^{\leq \mu}(\gJ)^A$ will be the image
of 
\[ (B^{\leq \mu}(\gJ))^A  \pil \kr[x_{P \times [n];\mu}]/L^{\leq \mu}(\gJ)^A
= \kr[\Delta^{\leq \mu}(\gJ)]. \]
%(Note that restriction of an ideal does not commute with 
%Alexander duality.)

For homological purposes considering an ideal in a polynomial ring
or the ideal generated by the ideal in a larger polynomial ring amounts to
the same. We get the minimal resolution from the latter by tensoring
up the former with a polynomial ring. 

However when it comes to boundaries things are different.
The boundary $\Sigma^{\leq \mu}(\gJ)$ will have different homological
behaviour than $\Sigma(\gJ)$, although the relationship
between them is simple.

\medskip
%\begin{remark}  \label{rem:GorstBier}
In \cite{MuSpheres} S.~Murai 
generalizes Bier spheres.
In our setting this relates as follows. 
Let $P$ be the antichain $\underline{d} = \{1, 2, \ldots, d\}$ on $d$ elements.
Take a sequence of natural numbers $c_1, \ldots, c_d$
and a larger natural number $n$.
Murai then considers poset ideals $\gJ$ of $\Hom(\underline{d},[n])$,
with the restriction that $\phi(i) \leq c_i$ for every $\phi \in \gJ$ and $i$.
Thus the map $\mu$ given by $\mu(i) = c_i$ is an upper bound for $\gJ$.
%The generators $m_{\Gamma \phi}$ of $L(\gJ)$ do then not involve the variables
%in $\{x_{i,j} \,|\, j > c_i \}$. 
%Our $\Delta(\gJ)$ will then be a cone with the indices of this set
%as vertex set.
%It is a cone over a
%simplicial complex $\Delta^{\leq {\mathbf c}}(\gJ)$. 
Murai shows \cite[Lemma 1.4]{MuSpheres} that $\Delta^{\leq \mu}(\gJ)$ is a ball 
unless $\gJ$ is the poset ideal of all maps $\phi$ with
$\phi(i) \leq c_i$ for all $i$. He calls 
this a Bier ball. His generalized Bier spheres are the boundaries
of these balls. (When $\gJ$ is the poset ideal of all elements
$\leq \mu$, then $\Delta^{\leq \mu}(\gJ)$ is a sphere, loc.~cit.)

The poset ideal $\gJ$ corresponds to a finite poset ideal in
$[n]^d \sus \NN^d$, 
also called a multicomplex. Such a finite multicomplex corresponds to an 
artinian monomial
ideal $J \sus \kr[x_1, \ldots, x_d]$, 
which is $\mu$-determined in the language of \cite{Mil}.
The Alexander dual $L^{\leq \mu}(\gJ)^A$, which is the 
Stanley-Reisner ideal corresponding to $\Delta^{\leq \mu}(\gJ)$, is then the 
(standard) polarization of the artinian monomial ideal $J$ considered in 
the polynomial ring
$\kr[x_{P \times [n];\mu}]$, see \cite[Lemma 3.2]{MuSpheres}.

\begin{remark}
In \cite{BjZi} they also construct a {\it Bier poset} $B(P,I)$ for any 
poset ideal $I$ in $P$. This is however a construction not seemingly
related to ours. The order complexes of these Bier posets are also not in 
general homology spheres.
\end{remark}

\bibliographystyle{amsplain}
\bibliography{Bibliography}

\providecommand{\bysame}{\leavevmode\hbox to3em{\hrulefill}\thinspace}
\providecommand{\MR}{\relax\ifhmode\unskip\space\fi MR }
% \MRhref is called by the amsart/book/proc definition of \MR.
\providecommand{\MRhref}[2]{%
  \href{http://www.ams.org/mathscinet-getitem?mr=#1}{#2}
}
\providecommand{\href}[2]{#2}
\begin{thebibliography}{10}

\bibitem{Bier}
Thomas Bier, \emph{{A remark on Alexander duality and the disjunct join}},
  preprint, 8 pages, 1992.

\bibitem{Bj}
Anders Bj{\"o}rner, \emph{Topological methods}, Handbook of Combinatorics
  (1995), 1819--1872.

\bibitem{BjZi}
Anders Bj{\"o}rner, Andreas Paffenholz, Jonas Sj{\"o}strand, and G{\"u}nter~M.
  Ziegler, \emph{Bier spheres and posets}, Discrete \& Computational Geometry
  \textbf{34} (2005), no.~1, 71--86.

\bibitem{Bol}
Davide Bolognini, \emph{{Recursive Betti numbers for Cohen-Macaulay $d$-partite
  clutters arising from posets}}, Journal of Pure and Applied Algebra
  \textbf{220} (2016), no.~9, 3102--3118.

\bibitem{BrHe}
Winfried Bruns and H.~J{\"u}rgen Herzog, \emph{Cohen-{M}acaulay rings},
  Cambridge University Press, 1998.

\bibitem{Cook}
David Cook~II, \emph{The uniform face ideals of a simplicial complex}, arXiv
  preprint arXiv:1308.1299 (2013).

\bibitem{NaCo}
Alberto Corso and Uwe Nagel, \emph{Monomial and toric ideals associated to
  {F}errers graphs}, Transactions of the American Mathematical Society
  \textbf{361} (2009), no.~3, 1371--1395.

\bibitem{DFN-LP}
Alessio D'Al{\`i}, Gunnar Fl{\o}ystad, and Amin Nematbakhsh, \emph{Resolutions
  of letterplace ideals of posets}, arXiv preprint arXiv:1601.02792 (2016), to
  appear on Journal of Algebraic Combinatorics.

\bibitem{Eng}
Anton Dochtermann and Alexander Engstr{\"o}m, \emph{Cellular resolutions of
  cointerval ideals}, Mathematische Zeitschrift \textbf{270} (2012), no.~1-2,
  145--163.

\bibitem{DoFa}
Anton Dochtermann and Fatemeh Mohammadi, \emph{Cellular resolutions from
  mapping cones}, Journal of Combinatorial Theory, Series A \textbf{128}
  (2014), 180--206.

\bibitem{EaRe}
John~A. Eagon and Victor Reiner, \emph{{Resolutions of Stanley-Reisner rings
  and Alexander duality}}, Journal of Pure and Applied Algebra \textbf{130}
  (1998), no.~3, 265--275.

\bibitem{ElKe}
Shalom Eliahou and Michel Kervaire, \emph{Minimal resolutions of some monomial
  ideals}, Journal of Algebra \textbf{129} (1990), no.~1, 1--25.

\bibitem{EHM}
Viviana Ene, J{\"u}rgen Herzog, and Fatemeh Mohammadi, \emph{{Monomial ideals
  and toric rings of Hibi type arising from a finite poset}}, European Journal
  of Combinatorics \textbf{32} (2011), no.~3, 404--421.

\bibitem{FlCMCell}
Gunnar Fl{\o}ystad, \emph{{Cohen-Macaulay cell complexes}}, Algebraic and
  geometric combinatorics, 2006, pp.~205--220.

\bibitem{FGH}
Gunnar Fl{\o}ystad, Bj{\o}rn~M{\o}ller Greve, and J{\"u}rgen Herzog,
  \emph{Letterplace and co-letterplace ideals of posets}, preprint,
  arXiv:1501.04523 (2015).

\bibitem{Good}
Afshin Goodarzi, \emph{{Cellular structure for the Herzog--Takayama
  resolution}}, Journal of Algebraic Combinatorics \textbf{41} (2015), no.~1,
  21--28.

\bibitem{Macaulay2}
Daniel~R. Grayson and Michael~E. Stillman, \emph{Macaulay2, a software system
  for research in algebraic geometry}, available at
  \url{http://www.math.uiuc.edu/Macaulay2/}.

\bibitem{HeTa}
J{\"u}rgen Herzog and Yukihide Takayama, \emph{Resolutions by mapping cones},
  Homology, Homotopy and Applications \textbf{4} (2002), no.~2, 277--294.

\bibitem{HeKa}
Inga Heudtlass and Lukas Katth{\"a}n, \emph{{Algebraic properties of Bier
  spheres}}, Le Matematiche \textbf{67} (2012), no.~1, 91--101.

\bibitem{Hor}
Noam Horwitz, \emph{Linear resolutions of quadratic monomial ideals}, Journal
  of Algebra \textbf{318} (2007), no.~2, 981--1001.

\bibitem{Katt}
Martina Juhnke-Kubitzke, Lukas Katth{\"a}n, and Sara~Saeedi Madani,
  \emph{Algebraic properties of ideals of poset homomorphisms}, arXiv preprint
  arXiv:1505.07581 (2015).

\bibitem{LutzBall}
Frank~H. Lutz, \emph{A vertex-minimal non-shellable simplicial 3-ball with 9
  vertices and 18 facets}, Electronic Geometry Model No. 2003.05.004.

\bibitem{Mil}
Ezra Miller, \emph{{The Alexander duality functors and local duality with
  monomial support}}, Journal of Algebra \textbf{231} (2000), no.~1, 180--234.

\bibitem{StMi}
Ezra Miller and Bernd Sturmfels, \emph{Combinatorial commutative algebra}, vol.
  227, Springer Science \& Business Media, 2005.

\bibitem{MuSpheres}
Satoshi Murai, \emph{Spheres arising from multicomplexes}, Journal of
  Combinatorial Theory, Series A \textbf{118} (2011), no.~8, 2167--2184.

\bibitem{NaRe}
Uwe Nagel and Victor Reiner, \emph{Betti numbers of monomial ideals and shifted
  skew shapes}, Electron. J. Combin. \textbf{16} (2009), no.~2, Special volume
  in honor of Anders Bj{\"o}rner, Research Paper 3, 59.

\bibitem{PeSt}
Irena Peeva and Mike Stillman, \emph{{The minimal free resolution of a Borel
  ideal}}, Expositiones Mathematicae \textbf{26} (2008), no.~3, 237--247.

\bibitem{Sin}
Achilleas Sinefakopoulos, \emph{{On Borel fixed ideals generated in one
  degree}}, Journal of Algebra \textbf{319} (2008), no.~7, 2739--2760.

\bibitem{Yan2000}
Kohji Yanagawa, \emph{Alexander duality for {S}tanley--{R}eisner rings and
  squarefree $\mathbb{N}^n$-graded modules}, Journal of Algebra \textbf{225}
  (2000), no.~2, 630--645.

\bibitem{Yan}
\bysame, \emph{Derived category of squarefree modules and local cohomology with
  monomial ideal support}, Journal of the Mathematical Society of Japan
  \textbf{56} (2004), no.~1, 289--308.

\end{thebibliography}

\end{document}